\newtheorem{theorem}{Theorem}
\newtheorem{lemma}{Lemma}
\newtheorem{proposition}{Proposition}
\newtheorem{assumption}{Assumption}
\pgfplotsset{compat=1.15}
\newcounter{algsubstate}
\def\algbackskip{\hskip-\ALG@thistlm}
\newcommand{\paren}[1]{\left(#1\right)}
\renewcommand{\equiv}{:=}
\newcommand{\xbar}{{\overline{x}}}
\newcommand{\norm}[1]{\left\|#1\right\|}
\newcommand{\R}{\mathbb{R}} 
\newcommand{\Ebb}{\mathbb{E}}
\newcommand{\M}{\text{M} }
\newcommand{\LF}{\text{LF}}
\newcommand{\SUPP}{\text{SUPP}}
\newcommand{\SR}{\text{SR}}
\newcommand{\SYM}{\text{SYM}}
\newcommand{\dist}{\mathrm{dist}}
\newcommand{\Id}{\mathrm{Id}}
\newcommand{\DR}{\mathrm{DR}}
\newcommand{\gph}{\mathrm{gph} }
\newcommand{\Fix}{\mathrm{Fix}\ }
\newcommand{\sym}{\mathrm{sym}}
\newcommand{\conv}{\mathrm{conv}}
\newcommand{\pncone}[1]{N^{\text{\rm prox}}_{#1}} 
\newcommand{\Dhat}{{\widehat{D}}}
\newcommand{\Nhat}{{\widehat{N}}}
\newcommand{\xhat}{{\widehat{x}}}
\newcommand{\yhat}{{\widehat{y}}}
\newcommand{\zhat}{{\widehat{z}}}
\newcommand{\uhat}{{\widehat{u}}}
\newcommand{\Ibbhat}{{\widehat{\mathbb{I}}}}
\newcommand{\footremember}[2]{%
    \footnote{#2}
    \newcounter{#1}
    \setcounter{#1}{\value{footnote}}%
}
\newcommand{\footrecall}[1]{%
    \footnotemark[\value{#1}]%
}
\DeclareMathOperator{\argmin}{argmin}
\def\sparse{\mathcal{A}_s}
\title{Algorithmic approaches to avoiding bad local minima in  nonconvex inconsistent feasibility}
\author{
Thi Lan Dinh\footremember{1}{Institut f\"ur Numerische und Angewandte Mathematik, Georg-August-Universit\"at G\"ottingen, G\"ottingen, Germany.}
\and 
Wiebke Bennecke\footremember{2}{I. Physikalisches Institut, Georg-August-Universität Göttingen, Göttingen, Germany,  G\"ottingen, Germany.}
\and G. S. Matthijs Jansen\footrecall{2} 
\and D. Russell Luke\footrecall{1}
\and Stefan Mathias\footrecall{2}}
\date{\today}
\begin{document}
\maketitle

\begin{abstract}
The main challenge of nonconvex optimization is to find a global optimum, or at least to avoid ``bad'' local minima and meaningless stationary
points.  We study here the extent to which algorithms, as opposed to optimization models and regularization, can be tuned to accomplish this goal.
The model we consider is a nonconvex, inconsistent feasibility problem with many local minima, where these are points at which the gaps
between the sets are smallest on neighborhoods of these points.  The algorithms that we compare are all projection-based algorithms, specifically
cyclic projections, the cyclic relaxed Douglas-Rachford algorithm, and relaxed Douglas-Rachford splitting on the product space.  The
local convergence and fixed points of these algorithms have already been characterized in pervious theoretical studies.  We demonstrate
the theory for these algorithms in the context of orbital tomographic imaging from angle-resolved photon emission spectroscopy (ARPES) measurements,
both synthetically generated and experimental.  Our results show that, while the cyclic projections and cyclic relaxed Douglas-Rachford algorithms
generally converge the fastest, the method of relaxed Douglas-Rachford splitting on the product space does move away from  bad local minima of the
other two algorithms, settling eventually on clusters of local minima corresponding to globally optimal critical points.
\end{abstract}

\textbf{Keywords:} nonconvex optimization, projection algorithm, Douglas-Rachford splitting, inconsistent feasibility, phase retrieval

\noindent{\bfseries Mathematics Subject Classification: }
65K10, 65K05, 90C26, 49M27, 49J53, 49K40, 49M05


\section{Introduction}
The Douglas-Rachford algorithm is frequently used in the optics community as an alternative to cyclic projections in order to avoid undesirable local minima.  There is good reason for this:  the Douglas-Rachford mapping does not have fixed points when applied to inconsistent feasibility problems \cite{Luke08}, so it {\em cannot} get stuck in bad local minima.  This advantage comes at the cost of not converging at all.  The standard practice in the optics community is to run a few iterations of a Douglas-Rachford-type algorithm, and then to run several iterations of cyclic projections to arrive at a satisfactory fixed point \cite{Luke17}.  We show in this paper that this practice, while not unreasonable, has things the wrong way around:  one should start with cyclic projections and then run a relaxed Douglas-Rachford algorithm in order to move out of undesirable local domains of attraction.  

Relaxations of the Douglas-Rachford algorithm have been extensively studied primarily with the goal of stabilizing the original Douglas-Rachford mapping \cite{Luke08, hesse2013nonconvex, Phan16, BorTam14, thao2018convergent} but also with an eye toward obtaining/preserving linear convergence for nonconvex feasibility \cite{HesseLukeNeumann14, LiPong16, DaoPhan18, luke2020convergence}.  Beyond not converging
for inconsistent feasibility problems, another drawback of the Douglas-Rachford algorithm is that there is not a unique way to extend it to splitting with more than two operators.  For multi-set feasibility,  one possibility is to formulate the problem as two-set feasibility between the sets on the product space and the {\em diagonal} of the product space \cite{Pierra84}.  Another approach, first studied in \cite{BorTam14}, is to apply the two-operator Douglas-Rachford mappings pairwise in a cyclic manner.
Obviously these two different approaches lead to different algorithms with different convergence properties.  A cyclic version of the relaxed Douglas-Rachford mapping studied in \cite{Luke08} was first proposed in the convex setting in \cite{luke2018relaxed}, and more recently the nonconvex setting is developed in \cite{DinhJansenLuke2024CDRl}, where the fixed points are characterized and local convergence of the algorithm is established under the weakest assumptions to date.

In this work we present numerical results showing the relative merits of all of the above strategies, leading to the observation that the relaxed Douglas-Rachford algorithm is a reasonable candidate for avoiding bad fixed points of the cyclic projections algorithm.  While this observation was not unexpected, based on the known characterization of the fixed points,  it was surprising to see that the product-space, relaxed Douglas-Rachford algorithm could be used to ``clean up'' the bad cyclic projections fixed points.  Our numerical demonstration focuses on the problem of tomographic reconstruction of molecular electronic structures from  angle-resolved photoemission spectroscopy (ARPES) data studied in \cite{Dinh_2024}.  We refer to this as the photoemission orbital tomography problem.  While this problem is quite specific, it has features that are common in many applications, and therefore serves as a good test case.  All of these algorithms have been compared on a number of different related problem classes \cite{luke2019optimization}, but our results for the orbital tomography problem provide a more complete picture than the conclusions resulting from \cite{luke2019optimization}. In particular, our results show that the cyclic relaxed Douglas-Rachford algorithm which appeared to be the best performer in most of the tests in \cite{luke2019optimization}, is not a good candidate for many-set feasibility problems like the orbital tomography problem.

In Section \ref{sec:prelimiaries}, we recall the mathematical definitions necessary for the convergence statements in this study.
In Section \ref{sec:3D.problem}, we review the physical ARPES orbital tomography problem and reformulate this as a feasibility problem,
 outline the numerical methods used for its solution (Section \ref{sec:algorithms}).
Since all of the algorithms in this comparison are built from projection operators,
we derive the formulas for the projectors onto the sets in the ARPES orbital tomography problem in Section \ref{sec:projection.formulas}, and
verify the main assumption for local linear convergence of the three algorithms for the
ARPES orbital tomography problem, with explicit formulations of the parameters involved in the rate constant (Section \ref{sec:convergence.guarantee}).
In Section \ref{sec:experiments}, we present numerical results using the dataset from \cite{Dinh_2024}. Compared to the current state-of-the-art cyclic projection method, CDR$\lambda$ achieves a higher success rate. Additionally, DR$\lambda$ demonstrates superior performance in avoiding bad local minima.
The code and data for the laboratory data is available from \cite{Bennecke25_data}.

\section{Preliminaries}\label{sec:prelimiaries}
Throughout this paper, $\Ebb$ is a finite-dimensional Euclidean space. In particular, set $\Ebb=\mathbb C^N$.
We recall definitions of almost nonexpansive/almost $\alpha$-firmly nonexpensive (see \cite[Definition 1]{berdellima2022alpha}).
Let $D$ be a nonempty subset of $\mathbb{E}$ and let $T$ be a (set-valued) mapping from $D$ to $\mathbb{E}$, i.e., $T:D\rightrightarrows \Ebb$.
\begin{enumerate}[(a)]
   \item  $T$ is said to be  {\em pointwise almost nonexpansive on $D$ at $ y \in D$} if there exists
        a constant $\epsilon\in[0,1)$ such that 
\begin{eqnarray}\label{e:epsqnonexp}
&&\|x^+- y^+\|\leq\sqrt{1+\epsilon}\|x- y\|\,,\\
&&~\forall~ y^+\in T y \mbox{ and } \forall~ x^+\in Tx \mbox{ whenever }x\in D. \nonumber
\end{eqnarray}
\item 
$T$ is {\em pointwise almost $\alpha$-firmly nonexpensive} (abbreviated pointwise a$\alpha$-fne) at $y\in D$
whenever there exists $\alpha\in (0,1)$ and $\epsilon\in [0,1)$:
\begin{eqnarray}
     && \forall x\in D\,,\,\forall x^+\in Tx\,,\,\forall y^+\in Ty\,, \\
     && \|x^+-y^+\|^2\leq (1+\epsilon)\|x-y\|^2-\frac{1-\alpha}{\alpha}\psi(x,y,x^+,y^+)\,,
\end{eqnarray}
where
\begin{equation}
    \psi(x,y,x^+,y^+):=\|(x^+-x)-(y^+-y)\|^2
\end{equation}
denotes transport discrepency.  When $\epsilon=0$, $T$ is said to be  {\em pointwise nonexpansive on $D$ at $ y \in D$}
(abbreviated pointwise $\alpha$-fne)
\end{enumerate}
The {\em projector} of a point $x$ onto the set $A\subset\Ebb$, is the mapping of $x$ to the set of
points where the distance to the set $A$ is attained, 
\begin{equation*}
    P_A x := \argmin_{a\in A} \|a - x\|.
\end{equation*}
When $A$ is {\em closed and non-empty} this set is nonempty. 
If $y \in  P_A x$, then $y$ is called a projection of $x$ on $A$. 
The reflector is defined by $R_ C:= 2P _C - \Id$. If $y\in R_Cx$, then $y$ is called reflection of $x$ across $C$.

We say that a closed set $A$ is prox-regular at $\bar x\in A$ if the projector $P_A$ is single-valued near $\bar x$, as stated in \cite[Theorem 1.3]{poliquin2000local}. Any closed convex set, for example, is prox-regular at any point in that set since the projector is everywhere single-valued.

Given set-value mapping $T:\Ebb \rightrightarrows \Ebb$. Inverse operator $T^{-1}$ is defined by
$    T^{-1} (a) := \{x \in \Ebb \ | \ a \in T x \}
$
and the graph of the mapping $T$ is the set
$
    \gph (T):=\{(x,y):x\in \Ebb\,,\,y\in Tx\}
$
which is a subset of $\Ebb\times \Ebb$. 

A {\em proximal normal cone} of a set $\Omega\subset\Ebb$ at $\bar{a}\in \Omega$ is defined by
		 	$$ N^P_\Omega(\bar{a}) := \text{cone}\left(P^{-1}_\Omega\bar{a}-\bar{a}\right).$$
The limiting (proximal) normal cone of $C$ at $\bar{a}$ is defined by
			$$ N_{\Omega}(\bar{a}) := \limsup\limits_{x\to\bar{a}}N_{\Omega}^{P}(x), $$
			where the limit superior is taken in the sense of Painlev\'e--Kuratowski outer limit.
When $\bar{a}\not\in \Omega$, all normal cones at $\bar{a}$ are empty (by definition).
A set $\Omega\subset \Ebb$ is called {\em $\epsilon$-super-regular at a distance relative
to $\Lambda\subset \Ebb$ at $\bar x$} with constant $\epsilon$ whenever
\begin{eqnarray}
    \exists ~U_\epsilon\subset\mathbb{E}~\mbox{ open}~ :~ &&\bar x\in U_\epsilon,\mbox{ and }\nonumber\\
    && \langle{v - (y'-y),y - x\rangle}\leq
    \epsilon_U\|v - (y'-y)\|\|y - x\|\label{eq:ele.subreg}\\
    &&\forall y'\in U_\epsilon\cap \Lambda, \quad \forall y\in P_\Omega(y'),\nonumber\\
    && \forall (x,v)\in\left\{(x,v)\in \gph \pncone {\Omega}~\mid~x+v\in U_\epsilon,~x\in P_\Omega(x+v)\right\}.
    \nonumber
\end{eqnarray}
The set $\Omega$ is called {\em super-regular at a distance relative
to $\Lambda$ at $\bar x$} if it is $\epsilon$-super-regular at a distance relative
to $\Lambda$ at $\bar x$ for all $\epsilon>0$.

By \cite[Proposition 3.1]{russell2018quantitative} we have the following implications:
\begin{equation*}
\begin{array}{ccc}
\text{convexity}&\Rightarrow      \text{prox-regularity} &\Rightarrow \epsilon\text{-super-regularity at a distance}.
\end{array}
\end{equation*}

For a subset $A\subset\Ebb$, the image of $A$ under $T$, denoted by $TA$, is defined by
\begin{equation*}
    T A:=\bigcup_{a\in A} Ta\,.
\end{equation*}
When the mapping $T:\Ebb \rightrightarrows \Ebb$ is single-valued at $y\in \Ebb$, that is if $Ty = \{z\}$ is a singleton,then
we simply write $z=Ty$.
The mapping $T$ is single-valued on $U\subset E$ if it is single-valued at all points $y\in U$.
With $T,Q:\Ebb \rightrightarrows \Ebb$ being set-value mappings, the composite $TQ$ or $T\circ Q$ from $\Ebb$ to $\Ebb$ is defined by
\begin{equation}
    TQx= (T\circ Q)x:= \bigcup_{y\in Qx} Ty.
\end{equation}
The set of fixed points of $T$ is defined by
$    \Fix T:=\{x\in \Ebb\,|\, x\in Tx\}.
$

\section{The 3D Photoemission orbital tomography problem}\label{sec:3D.problem}
\subsection{Problem statement}\label{sec:problem.statement}
We now specialize the theory in \cite{DinhJansenLuke2024CDRl} to the very concrete problem of orbital tomography with angle-resolved photoemission spectroscopy (ARPES) data.  The physical domain of the objects which are to be recovered from ARPES data is represented by $D\subset \mathbb{R}^3$. A point in $D$ corresponding to an index $(i,j,l)$ is denoted $(x_i, y_j, z_l)$, where $(i,j,l)\in \mathbb{I}\equiv \{1,\dots,N_x\}\times \{1,\dots,N_y\}\times \{1,\dots,N_z\}$. Here, $N_*$ represents the number of voxels in the $x$, $y$, and $z$ directions, respectively. The total number of voxels $|\mathbb{I}|$ is denoted by $N=N_x\times N_y\times N_z$.
The ``object" in question is the spatial representation of a molecular orbital, modeled for convenience as a complex-valued vector  $u\in \mathbb{C}^N$, each element of which, $u_{(i,j,l)}$, representing the ``value" (yielding the probability density by Born's rule) of the electronic orbital at the points $(x_i, y_j, z_l)\in D$.  The only complex values that the orbital $u_{(i,j,l)}$ takes are either $u_{(i,j,l)}=+|u_{(i,j,l)}|$ or $u_{(i,j,l)}=-|u_{(i,j,l)}|$ corresponding to the charge.       

The data domain (image space of the model), where the ARPES data is observed, is  denoted by $\Dhat\subset \mathbb{R}^3$.  This is discretized in a similar manner to the discretization of the domain, but potentially with different numbers of pixels/voxels. The point in $\Dhat$ corresponding to the index $(i,j,l)\in \Ibbhat\equiv \{1,\dots,\Nhat_x\}\times \{1,\dots,\Nhat_y\}\times \{1,\dots,\Nhat_z\}$ is denoted  $(\xhat_i,\yhat_j,\zhat_l)$. Here, $\Nhat_*$ represents the number of voxels in the $\xhat$, $\yhat$, and $\zhat$ directions in the image space. The total number of voxels $|\Ibbhat|$ is denoted by $\Nhat=\Nhat_x\times \Nhat_y\times \Nhat_z$.

The mathematical model for 3D photoemission orbital tomography is described in detail in \cite{Dinh_2024}.  Since our primary interest is the mathematical structure of the problem, we will not derive the mathematical model for the data.  In principle, the image $\uhat$ of the object $u$ to be recovered, is also a complex-valued vector  $\uhat\in \mathbb{C}^\Nhat$, each element of which, $\uhat_{(i,j,l)}$, representing the ``value" of the  electronic orbital upon transformation to the image domain at the points $(\xhat_i, \yhat_j, \zhat_l)\in \Dhat$.  
The model for the mapping to the image domain is 
simply a Fourier transform of the electronic orbitals, that is $\mathcal{F}u = \uhat$ where   $\mathcal{F}~:~\mathbb{C}^N\to \mathbb{C}^\Nhat$ denotes the discrete Fourier transform.  

The physical measurements are the element-wise amplitudes of $\uhat$ on discrete spheres in $\Dhat$.  In other words, the data, denoted $b_{(i,j,l)}\in \mathbb{R}_+$, specifies the amplitudes $|\uhat_{(i,j,l)}|$ only for indexes $(i,j,l)\in \mathbb{S}\subset\Ibbhat$.  The set of all possible vectors $u$ satisfying the data measurements is defined by 
\begin{equation}
\text{M} := \{u\in {{\mathbb{C}}^{N}}\mid \ |\uhat_{(i,j,l)}|=b_{(i,j,l)}, \  (i,j,l)\in \mathbb{S}\}\,.
\label{eq:phase_reformulated}
\end{equation}
The fact that the indexes $\mathbb{S}$ correspond to several spheres in $\Dhat$ is of no consequence to our present analysis;  for our purposes it is only relevant that $\mathbb{S}$ is a subset of all the indexes $\Ibbhat$.
In \cite{Dinh_2024} the authors find that four measurements are enough for reasonable reconstructions.
However, the challenge lies in the significant number of local solutions that appear in the reconstructions. The numerical results reported in Section \ref{sec:experiments} show that the CDR$\lambda$ algorithm can produce fewer bad local reconstructions than the cyclic projections algorithm used in \cite{Dinh_2024}.

We will use the same feasibility model as in  \cite{Dinh_2024}.
To recall, we formulate orbital tomography reconstruction problem as a multi-set feasibility model:
\begin{equation}\label{eq:multiset.model.orbit}
    \text{Find}\ u\in \text{SYM}\cap \SR \cap \SUPP\cap \LF\cap \M,
\end{equation}
where
\begin{itemize}
\item $\M$ is the measurement constraint set defined in \eqref{eq:phase_reformulated}.
\item $\LF$ is the set of points in the object domain  that satisfy a support constraint in the image domain:
\begin{equation}\label{eq:LM}
    \LF: = \{u\in {{\mathbb{C}}^{N}}\mid \ |\uhat_{(i,j,l)}|=0,  \ (\xhat_i,\yhat_j,\zhat_l) \notin B_{\overline{r}}\}\,.
    \end{equation}
    where $B_{\overline{r}}$ is a ball in $\Dhat$.
    \item $\SUPP$ is the set of points that satisfy a support constraint in the object domain:
    \begin{equation}\label{eq:support_constraint}
    \SUPP=\{u\in\mathbb{C}^{N}\mid \ u\Omega=u\}
\end{equation}
for some symmetric binary mask $\Omega\in\{0,1\}^{N}$.
\item SR is the set of points that satisfy a {\em sparse real constraint}:
\begin{equation}
\SR:= \{u\in {{\mathbb{C}}^{N}}\mid \ \|u\|_0 \leq s\mbox{ and }\text{Im}(u)=0\},
\label{eq:sparse_set}
\end{equation}
where, $s>0$ is given, Im$(u)$ is the imaginary part of the complex vector $u$, and $\|\cdot\|_0$ denotes the counting function that counts all non-zero elements. 
\item  $\SYM$ is the symmetry and anti-symmetry constraint set:
\begin{equation}\label{eq:symmetry_set}
\SYM := \left\{u\in {\mathbb{C}^{N}} \mid u_{(i,j,l)}=u_{(N_x-i+1,j,l)}=-u_{(i,N_y-j+1,l)}=-u_{(i,j,N_z-l+1)}\,,\, (i,j,l)\in \mathbb{I}
\right\}.
\end{equation}
\end{itemize}
The sets M and SR are nonconvex, so, without additional assumptions,
convergence to a solution of \eqref{eq:multiset.model.orbit} can only be guaranteed locally.
Note, however, that the sets SUPP and LF, though convex, do not in general have points in common:  any vector in SUPP that
is not periodic will not belong to LF.  The feasibility model \eqref{eq:multiset.model.orbit} is
therefore {\em inconsistent} and there do not exist solutions to \eqref{eq:multiset.model.orbit}.  We therefore must be content with
finding points that come as close as possible to satisfying all of the constraints.

\subsection{Projection methods}\label{sec:algorithms}
We compare the performance of three different algorithms, all of which can be written as the
following general fixed point iteration.
\begin{algorithm}
\caption{General fixed point iteration}\label{alg:gen fpa}
\begin{algorithmic}
    \STATE \textbf{Initialization:} Set $\text{tol} > 0$, $n = 0$, $\text{monitor}^{(0)} > \text{tol}$, $u^{(0)} \in \mathbb{C}^N$.
    \WHILE{$\text{monitor}^{(n)} > \text{tol}$}
        \STATE Set $n = n + 1$. Compute
        \begin{equation}\label{eq:1st.CP}
        u^{(n)} \in T u^{(n-1)}
        \end{equation}
        and update $\text{monitor}^{(n)}$.
    \ENDWHILE
\end{algorithmic}
\end{algorithm}
The three algorithms represent different instantiations of the mapping $T$, namely cyclic projections,
$T^{\text{CP}}_{\text{orbit3D}}$, cyclic relaxed Douglas-Rachford $T^{\text{CDR}\lambda}_{\text{orbit3D}}$
and relaxed Douglas-Rachford on the {\em product space}, $T^{\textbf{DR}\lambda}_{\text{orbit3D}}$.
Note that the exit criterion is based on the difference in the {\em shadows} of the iterates $u^{(n)}$ onto the
set $\SYM$, i.e., 
\begin{equation}\label{eq:monitor.CP.CDRl}
\text{monitor}^{(n)}:=\text{monitor}^{(n)}_1:= |P_{\SYM}u^{(n)}-P_{\SYM}u^{(n-1)}|.
\end{equation}  For the cyclic projection mapping $T^{\text{CP}}_{\text{orbit3D}}$, the shadows are just the iterates
themselves.  For the relaxed Douglas-Rachford algorithm below, this will not be the case.

We recall the cyclic projection algorithm in  \cite{Dinh_2024} to numerically solve problem \eqref{eq:multiset.model.orbit}.
Define
\begin{equation}\label{eq:first.CP}
T^{{\text{CP}}}_{\text{orbit3D}}:=P_{\SYM}\circ P_{\SR}\circ P_{\SUPP}\circ P_{\LF}\circ P_{\M}.
\end{equation}
In what follows $T^{\DR\lambda}_{A,B}$ denotes the relaxed Douglas-Rachford operator with respect to the sets $A,B$ for a fixed  relaxation parameter $\lambda\in [0.1]$, i.e.,
\begin{equation}\label{e:T_AB}
    T^{\DR\lambda}_{A,B}=\frac{\lambda}{2}(R_{A }R_{B}+\Id)+(1-\lambda)P_{B}.
\end{equation}
This method has been studied in \cite{luke2018relaxed} where quantitative convergence guarantees and characterization of the fixed points set
were established.  When there are more than two sets, as here, there are a number of different options for applying relaxed Douglas-Rachford-type splitting.  Two options that we compare here are {\em cyclic} relaxed Douglas-Rachford and relaxed Douglas-Rachford on the {\em product space}.  The cyclic version
takes the form
\begin{equation}\label{eq:CDRl_orbit3D}
    T^{\text{CDR$\lambda$}}_\text{orbit3D}:=T^{\DR\lambda}_{\SYM,\SR}\circ T^{\DR\lambda}_{\SR,\SUPP}\circ T^{\DR\lambda}_{\SUPP,\LF} \circ  T^{\DR\lambda}_{\LF,\M}  \circ T^{\DR\lambda}_{\M,\SYM}.
\end{equation}
To derive the product space formulation, let
$\mathbf{u}=(u_1,u_2,u_3,u_4,u_5)$, $C=\SYM\times\SR\times\SUPP\times\LF\times\M$ and denote by
$D$ the diagonal set of $\mathbb C^{N^5}$, which is defined by $\{\mathbf{u}=(u,u,u,u,u)\in \mathbb C^{N^5}\}$.
The operator $T^{\text{DR$\lambda$}}_\text{orbit3D}$ is given by
\begin{equation}\label{eq:DRl product}
    T^{\textbf{DR$\lambda$}}_\text{orbit3D}:= \frac{\lambda}{2}(R_DR_C+\Id)+(1-\lambda)P_C,
\end{equation}
where 
\begin{equation}
    P_C\mathbf{u} = (P_\SYM u_1,P_\SR u_2, P_\SUPP u_3, P_\LF u_4,P_\M u_5),
\end{equation}
and 
\begin{equation}
    [P_D\mathbf{u}]_i = \frac{P_\SYM u_1+P_\SR u_2+ P_\SUPP u_3+ P_\LF u_4+P_\M u_5}{5}, \,\forall i=1,2,3,4,5.
\end{equation}
The volume of the solutions to the problem is quintupled, but the computation of projections can be done in parallel.
In the consistent case, this algorithm would be used to solve the problem
\begin{equation}
    \text{Find } \mathbf{u}\in C\cap D.
\end{equation} In this case, it is easy to see that each entry of the solution $\mathbf{u}$ is a point in $\SYM\cap\SR\cap\SUPP\cap\LF\cap\M$.
In the inconsistent case, the fixed points of \eqref{eq:DRl product} are given abstractly by \cite[Theorem 3.13]{luke2020convergence} where
the characterization is in the product space.  The fixed points of $T^{\text{CDR$\lambda$}}_\text{orbit3D}$ were characterized
in \cite[Theorem 3.2]{DinhJansenLuke2024CDRl}.

To evaluate the quality of (approximate) fixed points, we calculate the \emph{gap}. The gap at a point is defined as the sum of the distances between the sets at that point.
An (approximate) fixed point that achieves a smaller gap is considered better. Let $ u^{(n)} $ be the  $n$-th iterate of algorithm \ref{alg:gen fpa}.
Set 
\begin{equation}
    \check{u}^{(n)}=\begin{cases}
        u^{(n)}&{\text{ if }T=T^{\text{CP}}_{\text{orbit3D}}}\\
        P_{\SYM}u^{(n)}&{\text{ if }T=T^{\text{CDR$\lambda$}}_{\text{orbit3D}}}\\
        P_{\SYM}u_1^{(n)}&\text{ if $T=T^{\textbf{DR}\lambda}_{\text{orbit3D}}$.}
    \end{cases}
\end{equation}
The gap at $n$-th iteration is computed as follows:
\begin{eqnarray}\label{e:gap}
    \text{gap}^{(n)}:=\text{gap}^{(n)}(\check{u}^{(n)}):=\frac{\text{gap}^{(n)}_{\SYM-\M}+ \text{gap}^{(n)}_{M-\LF}+\text{gap}^{(n)}_{\LF-\SUPP}+\text{gap}^{(n)}_{\SUPP-\SR}+\text{gap}^{(n)}_{\SR-\SYM}}{\|b\|},
    \nonumber
\end{eqnarray}
where
\begin{equation*}
    \begin{array}{rcl}
             & \text{gap}^{(n)}_{\SYM-\M} &:= \|\check{u}^{(n)}-P_{\M}\check{u}^{(n)}\|,\\
         & \text{gap}^{(n)}_{M-\LF}&:=\|P_{\M}\check{u}^{(n)}-P_{\LF}P_{\M}\check{u}^{(n)}\|\,, \\
         & \text{gap}^{(n)}_{\LF-\SUPP} &:= \|P_{\LF}P_{\M}\check{u}^{(n)}-P_{\SUPP}P_{\LF}P_{\M}\check{u}^{(n)}\|\,, \\
         & \text{gap}^{(n)}_{\SUPP-\SR} &:= \|P_{\SUPP}P_{\LF}P_{\M}\check{u}^{(n)} - P_{\SR}P_{\SUPP}P_{\LF}P_{\M}\check{u}^{(n)}\|\,,\\
         & \text{gap}^{(n)}_{\SR-\SYM} &:=\|P_{\SR}P_{\SUPP}P_{\LF}P_{\M}\check{u}^{(n)}-P_{\SYM}P_{\SR}P_{\SUPP}P_{\LF}P_{\M}\check{u}^{(n)}\|.
    \end{array}
\end{equation*} 
For the relaxed Douglas-Rachford operator, we also suggest using the gap for monitoring, i.e. we use the monitor as in \eqref{eq:monitor.CP.CDRl} and the different gap defined by
\begin{equation}
    \text{monitor}^{(n)}: = \text{monitor}^{(n)}_2:= |\text{gap}^{(n)}-\text{gap}^{(n-1)}|.
\end{equation}
The projection formulas onto the sets will be computed explicitly in the next section. 

For simulated data, the error between the iterate $u^{(n)}$ and the ground truth is  given by \begin{equation}\label{eq:error.with.truth}
E^{(n)}= \frac{1}{2}\min\left\{\left\|\frac{u^*}{\|u^*\|}\pm \frac{u^{(n)}}{\|u^{(n)}\|}\right\|
\right\}
\end{equation}
where $u^{(n)}$ denotes the preferred shadow of the reconstructed orbital at the $n$-the iteration and $u^*$ is the ground truth.
The error is the minimum of either the sum or the difference of the reconstruction and the reference ``truth" to account for the unavoidable global phase ambiguity.

\subsection{Projection formulas}\label{sec:projection.formulas}
We now present the explicit formulations for the projectors onto the sets in \eqref{eq:multiset.model.orbit}.
A projection of $u$ onto the set $\M$ with infinite precision arithmetic is computed by \cite[Theorem 4.2]{luke2002optical}
\begin{equation}
  P_{\M}u\equiv \left\{\mathcal{F}^{-1} v~|~ v_{(i,j,l)}\in\begin{cases}\left\{b_{(i,j,l)}\frac{\uhat_{(i,j,l)}}{|\uhat_{(i,j,l)}|}\right\}& \mbox{ if } |\uhat_{(i,j,l)}|\neq 0\\
\left\{b_{(i,j,l)}e^{\mathbf{i}\theta}\mid \theta\in[0,2\pi] \mbox{ and $\mathbf{i}^2=-1$}\right\}& \mbox{ else}\end{cases}\right\}
\label{eq:PM_theory}
\end{equation}
where $\mathcal{F}^{-1}$ is the discrete inverse Fourier transform.
Note that $P_{\M}u$ is in general a {\em set};  any element from this set will do.
This formula, since it involves division by a possibly small number, is not recommended numerically \cite[Corollary 4.3]{luke2002optical}.  Since our primary interest here is the convergence theory under the assumption of {\em exact} arithmetic, we will ignore this important detail and just caution readers against using this formula in practice.

The projector $P_{\LF}$ is given by
\begin{equation}\label{eq:P_LM}
     P_{\LF}u \equiv \mathcal{F}^{-1}(v),
     \quad\mbox{ where }\quad
    v_{(i,j,l)} =\begin{cases}
    \uhat_{(i,j,l)}&\text{ if $(\xhat_i,\yhat_j,\zhat_l)\in B_{\overline r}$}\\
    0 & \text{otherwise.}
    \end{cases}
\end{equation}
The projector $P_{\SUPP}$ is similar, and can be computed simply by applying the binary mask $\Omega$.
More precisely, the projection $u^{\SUPP}$ of a point $u\in\mathbb{C}^{N}$ onto the set SUPP (i.e., $u^{\SUPP}\in P_\SUPP u$) is given by
\begin{equation}\label{eq:P_SUPP}
     u^{\SUPP}_{(i,j,l)} =\begin{cases}
    u_{(i,j,l)}&\text{if }\Omega_{(i,j,l)}\neq 0,\\
    0 & \text{otherwise.}
    \end{cases}
\end{equation}
Both $P_{\LF}$ and $P_{\SUPP}$ are single-valued mappings since $\LF$ and $\SUPP$ are convex.

Now for the sparse-real constraint set $\SR$.  We present the formulation derived in \cite[Proposition 3.6]{bauschke2014restricted}. Define
$ \mathcal J:=2^{\left\{1,2,\dots,2N\right\}}$ and $\mathcal J_s:= \{ J\in \mathcal J ~|~ J \mbox { has} $ $ s\mbox{ elements}\}$.
The sparse set
\begin{equation}\label{eq:sparse.set}
\sparse:=\left\{x\in \mathbb C^N ~\middle |~\|x\|_0\le s\right\}
\end{equation} can be written as the union of all subspaces indexed by $J\in \mathcal J_s$ \cite[Equation (27d)]{bauschke2014restricted},
\begin{equation}\label{sparsedecomp}
 \sparse=\bigcup_{J\in \mathcal J_s} A_J,
\end{equation}
where $A_J:=\textup{span}\left\{e_i~\middle|~i\in J\right\}$ and $e_i$ is the $i-$th standard unit vector in $\mathbb{C}^N\approx (\R^2)^N$ where
 $i\in \{1, 2, \dots, 2N\}$ with the odd indexes being the real parts and the even indexes being the imaginary parts of the points.
For $x\in\mathbb{C}^N$ we define the set of $s$ largest coordinates in absolute value
\begin{equation}\label{e:Cs}
C_s(x):=\left\{J\in \mathcal J_s ~\middle|~ \min_{i\in J}|x_i|\geq \max_{i\notin J} |x_i|\right\}.
\end{equation}
The projection of a point $x\in \mathbb{C}^N$ onto the sparsity constraint $\sparse$ is then given by
\begin{equation}\label{eq:P_A}
P_{\sparse} x\equiv \bigcup_{J\in C_s(x)} P_{A_J}x
\end{equation}
Next, denote the real subspace of $\mathbb{C}^N$ by $\mathscr{R}^N$.  Clearly $(P_{\mathscr{R}^N}u)_j = (\text{Re}(u_j), 0)$ for $j=1,2,\dots, N$.
Now, putting this together with the real-valued constraint, it can easily be shown that the {\em projector} onto the set $\SR$ is given by
\begin{equation}\label{e:P_SR}
    P_{\SR}u= P_{\sparse}P_{\mathscr{R}^N}u.
\end{equation}
Indeed, we have
\begin{equation*}
\begin{array}{rl}
     P_\SR u=&
\argmin_
{v\in \mathbb C^N}\left\{\|v-u\|^2~|~\|v\|_0 \leq s\,,\,\text{Im}(v)=0 \right\} \\
[5pt]= & \argmin_{v\in \mathbb C^N}\left\{\|\text{Re}(v)-\text{Re}(u)\|^2 +
\|\text{Im}(v)\|^2~|~\|v\|_0 \leq s\,\right\}  \\
[5pt]
= & \argmin_{v\in \mathcal{A}_s}\left\{ \|\text{Re}(v)-\text{Re}(P_{\mathscr{R}^N}(u))\|^2 +
\|\text{Im}(v) - \text{Im}(P_{\mathscr{R}^N}(u))\|^2\right\}\\[5pt]
= & \argmin_{v\in \mathcal{A}_s}\left\{ \|v-P_{\mathscr{R}^N}(u)\|^2
\right\}
\\[5pt]
=&P_{\sparse}P_{\mathscr{R}^N}(u) \,.
\end{array}
\end{equation*}

There are two things to note about the formula \eqref{e:P_SR}:  firstly, this projector is set-valued reflecting the fact that the set $\sparse$ (and hence $\SR$) is not convex; secondly, if the order of the operations $P_{\sparse}P_{\mathscr{R}^N}(\cdot)$ is changed, the resulting operator is {\em not} the projection onto the set $\SR$.

Finally, we derive the projector onto the symmetry constraint.
Let
$\Pi_{\sym_{1}},\Pi_{\sym_{2}}, \Pi_{\sym_{1}}:\mathbb C^{N_1\times N_2\times N_3}\to
\mathbb C^{N_1\times N_2\times N_3}$ be linear operators defined by
\begin{equation}
\begin{array}{rl}
&\Pi_{\sym_{1}}u\equiv u^{T_1} = [u^{T_1}_{(i,j,l)}]_{(i,j,l)\in\mathbb I}\mbox{ where }u^{T_1}_{(i, j,l)} = u_{(N_1-i+1,j,l)}\,,\\[5pt]
&\Pi_{\sym_{2}}u\equiv u^{T_2}=[u^{T_2}_{(i,j,l)}]_{(i,j,l)\in \mathbb{I}}\text{ with } u^{T_2}_{(i,j,l)}=u_{(i,N_2-j+1,l)}\,,\\[5pt]
&\Pi_{\sym_{3}}u\equiv u^{T_3}=[u^{T_3}_{(i,j,l)}]_{(i,j,l)\in \mathbb{I}}\text{ with } u^{T_3}_{(i,j,l)}=u_{(i,j,N_3-l+1)}, \,\,(i,j,l)\in \mathbb I.
\end{array}
\end{equation}
These operators are simply transposition operators in the respective coordinate axes of the tensors.
\begin{lemma}
The symmetry set $\SYM$ can be expressed as:
\begin{equation}\label{e:symm as intersection}
\SYM = \SYM^3_-\cap\SYM^2_-\cap\SYM^1_+,
\end{equation}
where
\begin{equation}\label{e:symm subsets}
  \begin{array}{rl}
       &\SYM^1_+ \equiv \{u\in\mathbb{C}^N \mid u_{(i,j,l)}=u_{(N_1-i+1,j,l)}\},\\
       &\SYM^2_- \equiv \{u\in\mathbb{C}^N \mid u_{(i,j,l)}=-u_{(i,N_2-j+1,l)}\},\\
       &\SYM^3_- \equiv \{u\in\mathbb{C}^N \mid u_{(i,j,l)}=-u_{(i,j,N_3-l+1)}\}.
  \end{array}
\end{equation}

Moreover,
\begin{subequations}
\begin{equation}\label{e:P_SYM}
P_{\SYM} = P_{\SYM^3_-}\circ P_{\SYM^2_-}\circ P_{\SYM^1_+}
\end{equation}
where
\begin{eqnarray}\label{e:P_symm1}
P_{\SYM^1_+} &=&\frac{\Id+\Pi_{\sym_{1}}}{2}\\
\label{e:P_symm2}
P_{\SYM^2_-} &=&\frac{\Id-\Pi_{\sym_{2}}}{2}\\
\label{e:P_symm3}
P_{\SYM^2_-} &=&\frac{\Id-\Pi_{\sym_{3}}}{2}
\end{eqnarray}
\end{subequations}
\end{lemma}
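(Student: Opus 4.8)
The plan is to treat the three assertions in sequence, the first being essentially definitional and the last carrying the real content. For the set equality \eqref{e:symm as intersection} I would simply unwind the chain of equalities in \eqref{eq:symmetry_set}: requiring $u_{(i,j,l)}=u_{(N_1-i+1,j,l)}=-u_{(i,N_2-j+1,l)}=-u_{(i,j,N_3-l+1)}$ for all $(i,j,l)$ is, by transitivity through the common value $u_{(i,j,l)}$, exactly the conjunction of the three separate conditions defining $\SYM^1_+$, $\SYM^2_-$, and $\SYM^3_-$ in \eqref{e:symm subsets}. Hence $u\in\SYM$ iff $u$ lies in all three sets, which is \eqref{e:symm as intersection}, with no computation required.

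For the projector formulas \eqref{e:P_symm1}--\eqref{e:P_symm3}, the key structural fact is that each $\Pi_{\sym_k}$ is a coordinate permutation (a reversal of the $k$-th index) and hence a unitary operator; reversing an index twice returns the original, so $\Pi_{\sym_k}^2=\Id$, and a unitary involution is self-adjoint. A self-adjoint involution splits $\mathbb{C}^N$ orthogonally into its $+1$ and $-1$ eigenspaces, onto which the orthogonal projectors are $(\Id+\Pi_{\sym_k})/2$ and $(\Id-\Pi_{\sym_k})/2$. Since $\SYM^1_+$ is exactly the fixed-point set $\{u=\Pi_{\sym_1}u\}$ (the $+1$ eigenspace of $\Pi_{\sym_1}$), while $\SYM^2_-$ and $\SYM^3_-$ are the sets $\{u=-\Pi_{\sym_k}u\}$ (the $-1$ eigenspaces of $\Pi_{\sym_2}$, $\Pi_{\sym_3}$), the stated formulas follow immediately.

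The composition formula \eqref{e:P_SYM} is the only nontrivial step, and I would reduce it to the standard fact that a product of pairwise \emph{commuting} orthogonal projections equals the orthogonal projection onto the intersection of their ranges. To apply it I first check commutativity of $\Pi_{\sym_1},\Pi_{\sym_2},\Pi_{\sym_3}$: each reverses a different index, so applying two of them in either order reflects the two corresponding indices independently, e.g. both $\Pi_{\sym_1}\Pi_{\sym_2}u$ and $\Pi_{\sym_2}\Pi_{\sym_1}u$ have entry $u_{(N_1-i+1,N_2-j+1,l)}$ at position $(i,j,l)$. As each projector is an affine function $(\Id\pm\Pi_{\sym_k})/2$ of the corresponding $\Pi_{\sym_k}$, this commutativity transfers to the three projectors. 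For commuting orthogonal projectors $P,Q$ one verifies $(PQ)^2=PQ$ and $(PQ)^*=PQ$, so $PQ$ is again an orthogonal projector, and a short range argument gives $\mathrm{range}(PQ)=\mathrm{range}(P)\cap\mathrm{range}(Q)$; iterating over the three factors identifies $P_{\SYM^3_-}\circ P_{\SYM^2_-}\circ P_{\SYM^1_+}$ as the projector onto $\SYM^1_+\cap\SYM^2_-\cap\SYM^3_-=\SYM$, using \eqref{e:symm as intersection}. I expect the main obstacle to be precisely this last point---verifying commutativity and invoking the commuting-projection lemma---since the remaining steps are just unwinding of definitions and eigenspace bookkeeping.
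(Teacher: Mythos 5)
Your proposal is correct, but it takes a genuinely different route from the paper on the one nontrivial point, the composition formula \eqref{e:P_SYM}. The paper treats the set identity \eqref{e:symm as intersection} exactly as you do (immediate from the definition), cites a textbook exercise for the eigenspace projector formulas \eqref{e:P_symm1}--\eqref{e:P_symm3}, and then proves \eqref{e:P_SYM} by invoking von Neumann's alternating-projection theorem for subspaces: from any starting point $u$, cyclic projections onto $\SYM^1_+$, $\SYM^2_-$, $\SYM^3_-$ converge to $P_{\SYM}u$, and a routine calculation shows that a single pass $u^{(1)} = P_{\SYM^3_-}\circ P_{\SYM^2_-}\circ P_{\SYM^1_+}u$ is already a fixed point of the cycle, so the limit is attained after one pass and $u^{(1)}=P_{\SYM}u$. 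You instead check that the index-reversal operators $\Pi_{\sym_1},\Pi_{\sym_2},\Pi_{\sym_3}$ commute (each reverses a distinct index), transfer commutativity to the projectors $\left(\Id\pm\Pi_{\sym_k}\right)/2$, and apply the standard lemma that a product of commuting orthogonal projectors is the orthogonal projector onto the intersection of their ranges. Both arguments are valid; yours is more elementary and fully self-contained, replacing an asymptotic convergence theorem by finite linear algebra, and it makes explicit the commutation structure that the paper's ``routine calculation'' of the fixed-point property implicitly exploits (indeed, verifying that fixed-point identity amounts to essentially the same computation). The paper's route is shorter on the page and illustrates a technique that still works when one can verify the one-pass fixed-point property without having commutativity. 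Your derivation of \eqref{e:P_symm1}--\eqref{e:P_symm3} from the fact that each $\Pi_{\sym_k}$ is a self-adjoint unitary involution, whose $\pm 1$ eigenspaces are exactly the sets in \eqref{e:symm subsets}, is also a clean self-contained substitute for the paper's external citation. One cosmetic remark: the third displayed formula in the statement should read $P_{\SYM^3_-}$ rather than $P_{\SYM^2_-}$; your proof handles the correct operator.
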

\begin{proof}
The representation \eqref{e:symm as intersection} follows immediately from the definition of the set
$\SYM$ \eqref{eq:symmetry_set}.

It is easy to see that $\SYM^1_+,\SYM^2_-,\SYM^3_-$ are subspaces of the Hilbert space $\mathbb C^N$.
The formulas for the projections onto these subspaces, \eqref{e:P_symm1}-\eqref{e:P_symm3} follow from \cite[Section 9.7, exercise 2]{dennis1996numerical}.
Now, by \cite{Neumann50}, from any starting point $u$, the cyclic projections algorithm applied to subspaces converges to the projection of $u$ onto the intersection of the subspaces.  A routine calculation shows that $u^{(1)}:=P_{\SYM^3_-}\circ P_{\SYM^2_-}\circ P_{\SYM^1_+} u$ satisfies
$u^{(1)}:=P_{\SYM^3_-}\circ P_{\SYM^2_-}\circ P_{\SYM^1_+} u^{(1)}$, in other words, $u^{(1)}$ is a fixed point of the cyclic projections algorithm.  By
\cite{Neumann50} $u^{(1)}$ is therefore the projection of $u$ onto $\SYM$.
Hence, since $u$ is arbitrary,
$P_\SYM  = \left(P_{\SYM^3_-}\circ P_{\SYM^2_-}\circ P_{\SYM^1_+}\right)$ as claimed.
\end{proof}

\subsection{Convergence and Fixed Point Characterizations}\label{sec:convergence.guarantee}

In this section, we prove locally linear convergence of all of the methods in our comparison, namely Algorithm \ref{alg:gen fpa}
when the fixed point mapping $T$ is either cyclic projections \eqref{eq:1st.CP}, cyclic replaxed Douglas-Rachford \eqref{eq:CDRl_orbit3D},
or relaxed Douglas-Rachford on the product space \eqref{eq:DRl product} applied to the 3D orbital tomography problem \eqref{eq:multiset.model.orbit}.
The convergence follows from $\epsilon$-super-regularity of the sets $\M$, $\LF$, $\SUPP$, $\SYM$ in section \ref{sec:problem.statement}.
The convergence proof for general sets is provided in our paper \cite{DinhJansenLuke2024CDRl}. 
Similar to that, the linear convergence of cyclic projection for the 3D orbital tomography problem is obtained, see \cite{Dinh_2024}. 
We finish this section with a brief statement of the characterizations of the fixed point sets.

\subsubsection{Convergence with Rates}
The following lemma states the prox-regularity of the above sets. 

\begin{lemma}\label{lem:prox.regular.sets}
The following statements hold:
\begin{enumerate}
    \item\label{lem:prox.regular.sets i} 
    The set $\M$ defined by \eqref{eq:phase_reformulated} is closed and prox-regular.
    \item\label{lem:prox.regular.sets ii} The sets $\SYM$, $\SUPP$, and $\LF$ defined respectively as in \eqref{eq:symmetry_set},  \eqref{eq:support_constraint}, and \eqref{eq:LM} are convex.
    \item\label{lem:prox.regular.sets iii} For each $u\in \SR\backslash\{0\}$ with $\SR$  being defined as in \eqref{eq:sparse_set},
    there exist a neighborhood $\check U\subset \mathbb C^N$ of $u$ and a set $\check \Lambda\subset \mathbb C^N$ such that
    $P_{\SR}$ is pointwise $\alpha$-fne with $\alpha=1/2$ and the reflector $R_\SR$ is pointwise nonexpansive at each
    $\check y'\in\check \Lambda$ (violation $0$) on $\check U$.
\end{enumerate}
\end{lemma}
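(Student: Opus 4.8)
My plan is to dispatch items \ref{lem:prox.regular.sets i} and \ref{lem:prox.regular.sets ii} by structural inspection and to concentrate the effort on item \ref{lem:prox.regular.sets iii}. For \ref{lem:prox.regular.sets i}, I would pass to the image domain and write $\M=\mathcal F^{-1}(\widehat M)$ with $\widehat M:=\{v\in\mathbb C^{\Nhat}\mid |v_{(i,j,l)}|=b_{(i,j,l)},\ (i,j,l)\in\mathbb S\}$. Closedness is immediate from continuity of the moduli. For prox-regularity, $\widehat M$ is a Cartesian product over indices whose factors are a circle of radius $b_{(i,j,l)}$ at each index of $\mathbb S$ with $b_{(i,j,l)}>0$, the singleton $\{0\}$ where $b_{(i,j,l)}=0$, and a full copy of $\mathbb C$ at each index outside $\mathbb S$; each factor---and hence the product---is a smooth closed submanifold, which is prox-regular in the sense of \cite{poliquin2000local}. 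Since $\mathcal F$ is unitary up to a scalar, $\mathcal F^{-1}$ is an invertible similarity that intertwines the two projectors, and therefore preserves both closedness and prox-regularity, giving the claim for $\M$. Item \ref{lem:prox.regular.sets ii} is then immediate once one notes that each set is a linear subspace, hence convex: $\SUPP$ in \eqref{eq:support_constraint} is the subspace of vectors vanishing off the mask $\Omega$, $\LF$ in \eqref{eq:LM} is the $\mathcal F$-preimage of a coordinate subspace of the image domain, and $\SYM$ is by \eqref{e:symm as intersection} the intersection of the subspaces $\SYM^1_+,\SYM^2_-,\SYM^3_-$.

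The substance is item \ref{lem:prox.regular.sets iii}. By the decomposition \eqref{sparsedecomp} and the identity $P_\SR=P_\sparse P_{\mathscr{R}^N}$ from \eqref{e:P_SR}, the set $\SR$ is a finite union of coordinate subspaces of the real subspace $\mathscr{R}^N$. Fix $u\in\SR\setminus\{0\}$, set $I:=\mathrm{supp}(u)$ and $k:=|I|\le s$. I would take $\check U$ to be a ball around $u$ small enough that for every $w\in\check U$ the coordinates indexed by $I$ dominate---they stay bounded away from $0$ while those off $I$ remain small---so that every index set $J\in C_s(\mathrm{Re}\,w)$ appearing in \eqref{e:Cs} satisfies $J\supseteq I$; and I would take $\check\Lambda:=A_I\cap\mathscr{R}^N$, the least-support subspace through $u$. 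Each reference point $\check y'\in\check\Lambda$ lies in $\SR$, so $P_\SR\check y'=\{\check y'\}$ and the transport discrepancy reduces to $\|w^+-w\|^2$; moreover, for every $w\in\check U$ and every $w^+\in P_\SR w$ the point $w^+$ is the projection of $\mathrm{Re}\,w$ onto a subspace $A_J\cap\mathscr{R}^N$ with $I\subseteq J$, whence $\check y'\in A_J$. Since $\check y'$ and $w^+$ both lie in $A_J\cap\mathscr{R}^N$ while $\mathrm{Re}\,w-w^+\perp A_J$, separating real and imaginary parts yields the orthogonal (Pythagorean) identity $\|w-\check y'\|^2=\|w^+-\check y'\|^2+\|w^+-w\|^2$. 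As $\tfrac{1-\alpha}{\alpha}=1$ for $\alpha=\tfrac12$, this is exactly the defining inequality of pointwise $\alpha$-firm nonexpansiveness of $P_\SR$ at $\check y'$ on $\check U$ with violation $\epsilon=0$, here holding with equality. Writing $a:=w^+-\check y'$ and $b:=w-\check y'$, this inequality is equivalent to $\|a\|^2\le\langle a,b\rangle$, which gives $\|R_\SR w-R_\SR\check y'\|^2=\|2a-b\|^2\le\|b\|^2=\|w-\check y'\|^2$, so $R_\SR$ is pointwise nonexpansive at $\check y'$ on $\check U$ with violation $0$.

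The main obstacle is the degenerate case $k<s$, where $\SR$ is genuinely a union of several subspaces sharing the common subspace $A_I\cap\mathscr{R}^N$ through $u$, so $P_\SR$ is multivalued arbitrarily close to $u$ and $\SR$ fails to be prox-regular at $u$; a naive choice of $\check\Lambda$ (for instance reference points not supported on $I$) indeed produces a strictly positive violation. The point of the \emph{relative}, \emph{pointwise} formulation is precisely to bypass this: by restricting the reference set to $\check\Lambda=A_I\cap\mathscr{R}^N$ one forces $\check y'\in A_J$ for every active subspace $A_J$ met by projections out of $\check U$, and firm nonexpansiveness is then inherited subspace-by-subspace even though a two-sided prox-regular estimate is unavailable. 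Verifying that $\check U$ can be shrunk so that every $J\in C_s(\mathrm{Re}\,w)$ contains $I$---that is, that the active support of every nearby projection includes $\mathrm{supp}(u)$---is the one genuinely technical point, and it rests only on the strict separation between the coordinates of $u$ on $I$ and the zero coordinates off $I$.
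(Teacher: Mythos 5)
Your proposal is correct, but for the substantive part (iii) it takes a genuinely different route from the paper. The paper never touches the union-of-subspaces geometry directly: it quotes \cite[Theorem III.4]{HesseLukeNeumann14} to get $(0,\delta)$-subregularity of $\sparse$ at $u$ for $\delta\in(0,\min\{|u_i|:i\in I(u)\})$, upgrades this to $\epsilon$-superregularity with violation $0$ via \cite[Proposition 3.1]{russell2018quantitative}, and then invokes \cite[Proposition 3.4(ii)]{luke2018relaxed} to obtain a neighborhood $U$ and a reference set $\Lambda$ on which $P_{\sparse}$ is pointwise $\alpha$-fne with $\alpha=1/2$ and violation $0$; only after that does it perform the same real/imaginary Pythagorean splitting you use, taking $\check U=P_{\mathscr{R}^N}^{-1}(U)$ and $\check\Lambda=P_{\mathscr{R}^N}^{-1}(\Lambda)$, and it concludes the reflector claim by citing \cite[Proposition 2.3(ii)]{russell2018quantitative} rather than by your explicit $\|2a-b\|^2\le\|b\|^2$ computation. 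Your argument replaces the upstream citations with a self-contained geometric proof: the support-dominance argument forcing $I\subseteq J$ for every $J\in C_s(\mathrm{Re}\,w)$, $w\in\check U$, is exactly the content packaged inside the quoted subregularity theorem, and your choice $\check\Lambda=A_I\cap\mathscr{R}^N$ makes the a$\alpha$-fne inequality hold with equality. What your route buys is transparency and explicit sets and constants, plus the instructive remark that $\SR$ itself fails prox-regularity when $|\mathrm{supp}(u)|<s$, which motivates the pointwise/relative formulation. What the paper's route buys is brevity and, more importantly, a fatter reference set: your $\check\Lambda$ lies inside $\SR$, so the case of a reference point whose own projection is nontrivial (or lands in a different subspace $A_{J'}$) never arises, whereas the paper's preimage-type $\check\Lambda$ is the form that Assumption \ref{a:1}\eqref{a:1,b} actually requires downstream, namely $\Lambda_2=P^{-1}_{A_2}(A_2\cap U_2)\cap U_2$; your construction proves the lemma as literally stated but would need the extra two-subspace argument to instantiate that assumption. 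A smaller difference occurs in part (i): you prove prox-regularity of $\M$ explicitly (a product of circles, singletons and copies of $\mathbb C$, conjugated by the scaled-unitary DFT), while the paper simply observes closedness and cites \cite{Luke08}; both are valid, and part (ii) is routine in either treatment.
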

\begin{proof}
Part \eqref{lem:prox.regular.sets i}. Since $M$ is determined by the equalities of continuous functions, it is closed.
Prox regularity of sets of the form \eqref{eq:phase_reformulated} was first used in \cite{Luke08} and follows from uniqueness of the projector $P_{\M}$ on small enough neighborhoods of $\M$. 

Part \eqref{lem:prox.regular.sets ii}. It is a simple exercise to show that $\SYM$, $\SUPP$, and $\LF$ are convex.

Part \eqref{lem:prox.regular.sets iii}. Before we prove the third statement, we explain a little about the logic and technical difficulties.  The theory developed in
\cite{russell2018quantitative} determines the regularity of projectors and reflectors from the regularity of the sets.
To this point, we have not characterized the regularity of the set $\SR$, but we know that this is inherited by the
regularity of the sets $\mathcal A_s$ defined in \eqref{eq:sparse.set} and the real subspace $\mathscr{R}^N$ since
$\SR=\mathcal A_s\cap \mathscr{R}^N$.  Our proof of the regularity of $P_{\SR}$ and $R_{\SR}$ goes via the regularity of
$P_{\mathcal{A}_s}$ and $P_{\mathscr{R}^N}$ rather than using an explicit characterization of the regularity of the
set $\SR$.

While $\mathcal A_s$ is a subset of $\mathbb{C}^N$, this can be transformed isometrically to a subset of $\R^{2N}$,
where such sparsity sets have already been analyzed \cite{HesseLukeNeumann14}.
Let $u\in \SR\backslash\{0\}$. 
Then we have $u\in \mathcal A_s\backslash \{0\}$ with $\mathcal A_s$ being defined in \eqref{eq:sparse.set}.
By \cite[Theorem III.4]{HesseLukeNeumann14}, $\mathcal A_s$ is $(0, \delta)$-subregular at $u$ for
$\delta\in (0,\min\{|u_i|:i\in I(u)\})$, where $I(u)=\{i\in\{1,\dots,2N\}:u_i\ne 0\}$ with the odd elements indexing
the real part of the vector $u\in \mathbb{C}^N$ and the even elements indexing the imaginary part. 
Using \cite[Proposition 3.1 (ii)]{russell2018quantitative} this implies that $\mathcal A_s$ is $\epsilon$-subregular at $u$
for all normal vectors $v\in N_{{\mathcal A}_s}(w)$ where $w\in \mathbb{B}_\delta(u)\cap {\mathcal A}_s$
with violation $\epsilon=0$, and hence $\epsilon$-superregular at $u$ with violation $\epsilon=0$ \cite[Proposition 3.1 (v)]{russell2018quantitative}.
By \cite[Proposition 3.4(ii)]{luke2018relaxed}, there are a set $\Lambda\subset \mathbb C^N$ and a neighborhood $U\subset\mathbb C^N$ of $u$ such that the projector $P_{\mathcal A_s}$ is pointwise $\alpha$-fne at each $y'\in\Lambda$
with $\alpha=1/2$ and violation $0$ on $U$; that is,
\begin{eqnarray}\label{eq:panex}
&&\|x-y\|^2+\|(x'-x)-(y'-y)\|^2\leq  \|x'-y'\|^2\\
&&\forall x'\in U,\,y'\in\Lambda,\, x\in P_{\mathcal A_s} x',\,y\in P_{\mathcal A_s} y'.
\end{eqnarray}
Let $\check\Lambda=P_{\mathscr{R}^N}^{-1}(\Lambda)$ and $\check U=P_{\mathscr{R}^N}^{-1}(U)$.
Let $\check y'\in \check\Lambda$ and $\check x'\in \check U$. 
Then $P_{\mathscr{R}^N}\check y'\in \Lambda$ and $P_{\mathscr{R}^N}\check x'\in U$.
By \eqref{e:P_SR} we have $P_{\SR}=P_{\sparse}P_{\mathscr{R}^N}$, so for all
$\check x\in P_{\SR}\check x'=P_{\sparse}P_{\mathscr{R}^N}\check x'$ and $\check y\in P_{\SR}\check y'= P_{\sparse}P_{\mathscr{R}^N}\check y'$, we have
\begin{equation}
\begin{array}{rl}
& \|\check x-\check y\|^2+\|(\check x'-\check x)-(\check y'-\check y)\|^2\\[5pt]
&\quad =\|\check x-\check y\|^2+\left(\|(\text{Re}(\check x'-\check x))-(\text{Re}(\check y'-\check y))\|^2+\|\text{Im}(\check x')-\text{Im}(\check y')\|^2\right)\\[5pt]
&\quad=\left(\|\check x-\check y\|^2+\|(P_{\mathscr{R}^N}\check x'-\check x)-(P_{\mathscr{R}^N}\check y'-\check y)\|^2\right)+\|\text{Im}(\check x')-\text{Im}(\check y')\|^2\\[5pt]
&\quad \leq  \|P_{\mathscr{R}^N}\check x'-P_{\mathscr{R}^N}\check y'\|^2+\|\text{Im}(\check x')-\text{Im}(\check y')\|^2\quad\text{(by \eqref{eq:panex})}\\[5pt]
&\quad \leq  \|\text{Re}(\check x')-\text{Re}(\check y')\|^2+\|\text{Im}(\check x')-\text{Im}(\check y')\|^2\\[5pt]
&\quad = \|\check x'-\check y'\|^2.
\end{array}
\end{equation}
This establishes that the projector $P_\SR$ is pointwise $\alpha$-fne at each $\check y'\in\check \Lambda$
($\alpha=1/2$ and violation $0$) on $\check U$.
By \cite[Proposition 2.3(ii)]{russell2018quantitative}, the reflector $R_\SR$ is therefore
pointwise nonexpansive at each $\check y'\in\check \Lambda$
(violation $0$) on $\check U$.  This establishes the third statement and completes the proof.
\end{proof}

The following theorem establishes one of two central assumptions for quantitative convergence of the
cyclic relaxed Douglas-Rachford algorithm for the orbital tomography reconstruction problem \eqref{eq:multiset.model.orbit}.
For this we will need additional technical assumptions.
\begin{assumption}\label{a:1}  For ease of notation, let $A_1\equiv \SYM$, $A_2\equiv \SR$, $A_3\equiv\SUPP$, $A_4\equiv \LF$ and
$A_5\equiv \M$. Consider the following conditions with $A_6\equiv A_1$, $U_6=U_1=U_3=U_4=\Lambda_6=\Lambda_1=\Lambda_3=\Lambda_4=\mathbb C^N$:
\begin{enumerate}[(a)]
\item\label{a:1,b}  On the neighborhood $U_2$ of $x_2\in A_2$, the projector $P_{A_2}$ is pointwise $\alpha$-fne with $\alpha=1/2$ at
each point in $\Lambda_2:=P^{-1}_{A_2} (A_2\cap U_2)\cap U_2$, and the set
$A_5$ is $\epsilon$-super-regular at $x_5\in A_5$ relative to the set $\Lambda_5 :=P^{-1}_{A_5} (A_5\cap U_5)\cap U_5$
with constant $\epsilon_{U_5}$ on the neighborhood $U_5$ of $x_5$.

\item\label{a:1,c} The neighborhood $U_2$ and the set
 $\Lambda_2$ satisfy
$R_{A_{3}}U_{3}\subseteq U_{2}$ and $R_{A_{3}}\Lambda_{3}\subseteq \Lambda_{2}$.
Similarly, the neighborhood $U_5$ and the set
 $\Lambda_5$ satisfy
$R_{A_{6}}U_{6}\subseteq U_{5}$ and $R_{A_{6}}\Lambda_{6}\subseteq \Lambda_{5}$.
\item\label{a:1,d}  $T_{2,3}\Lambda_{3}\subseteq \Lambda_{2}$,
$T_{2,3}U_{3}\subseteq U_{2}$, $T_{5,6}\Lambda_{6}\subseteq \Lambda_{5}$, and
$T_{5,6}U_{6}\subseteq U_{5}$, for $T_{j,j+1}\equiv T^{\text{DR}\lambda}_{A_j, A_{j+1}}$ defined by \eqref{e:T_AB}.
\item\label{a:1,d'}$\epsilon_{U_5}\in [0,4\sqrt{2}/7-5/7)$.
\item\label{a:1,e}
$P_{A_{3}}U_{3}\subseteq U_{2}$ and $P_{A_{3}}\Lambda_{3}\subseteq \Lambda_{2}$.
\item\label{a:1,f} $\epsilon_{U_5}\in [0,2\sqrt{3}/3-1)$.
\end{enumerate}
\end{assumption}
\begin{lemma}\label{lem:Tfirmly.nonep}
Let $\lambda\in [0,1]$ be fixed, and for $j=1,2$, let $B_j$ be a subset of $\mathbb E$ such that $R_{B_j}$ is pointwise almost nonexpansive
with violation $\tilde \epsilon_j$  at all points in $\Lambda_j$ on
$U_j$.
Suppose that $ R_{B_{2}}\Lambda_{2}\subseteq \Lambda_1$
and $R_{B_{2}}U_{2}\subseteq U_1$.
Then the two-set relaxed Douglas-Rachford mapping defined by
 \begin{equation}\label{eq:def.DRl}
     T=\frac{\lambda}{2}(R_{B_1}R_{B_{2}}+\Id)+(1-\lambda)P_{B_{2}}
 \end{equation}
is pointwise a$\alpha$-fne at all $y\in \Lambda_{2}$ with constant $\alpha=1/2$ and
violation $\tilde \epsilon$ on $U_{2}$
where
\begin{equation}
    \tilde \epsilon:=\frac{1}{2}\left[\left(\lambda\sqrt{1+\tilde\epsilon_{1}}+1-\lambda\right)^2(1+\tilde\epsilon_{2})-1\right].
\end{equation}
\end{lemma}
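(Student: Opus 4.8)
The plan is to reduce the claim to a single almost-nonexpansiveness estimate for the auxiliary operator $N:=2T-\Id$ and then chain the two given reflector bounds. Since $\alpha=1/2$ gives $\frac{1-\alpha}{\alpha}=1$, I would first record the normalization $T=\tfrac12(\Id+N)$, so that every selection $x^+\in Tx$ has the form $x^+=\tfrac12(x+nx)$ with $nx\in Nx$, and conversely. Writing $a:=x-y$ and $b:=nx-ny$, one has $x^+-y^+=\tfrac12(a+b)$ and $(x^+-x)-(y^+-y)=\tfrac12(b-a)$, so a one-line parallelogram-law computation yields
\begin{equation*}
\|x^+-y^+\|^2+\psi(x,y,x^+,y^+)=\tfrac12\|a\|^2+\tfrac12\|b\|^2=\tfrac12\|x-y\|^2+\tfrac12\|nx-ny\|^2.
\end{equation*}
Consequently the desired pointwise a$\alpha$-fne inequality (with $\alpha=1/2$, violation $\tilde\epsilon$) holds for the pair $(x^+,y^+)$ if and only if $\|nx-ny\|^2\le(1+2\tilde\epsilon)\|x-y\|^2$, i.e. iff $N$ is pointwise almost nonexpansive with violation $2\tilde\epsilon$. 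This converts the entire lemma into a nonexpansiveness statement for $N$.

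Next I would compute $N$ in closed form. Using $P_{B_2}=\tfrac12(R_{B_2}+\Id)$ and expanding $N=2T-\Id$ directly from \eqref{eq:def.DRl}, the term $2(1-\lambda)P_{B_2}=(1-\lambda)(R_{B_2}+\Id)$ combines with the $\lambda\,\Id$ term and the $-\Id$, all identity contributions cancel, and I expect to obtain
\begin{equation*}
N=\lambda R_{B_1}R_{B_2}+(1-\lambda)R_{B_2}=\bigl(\lambda R_{B_1}+(1-\lambda)\Id\bigr)\circ R_{B_2}.
\end{equation*}
This is the crucial structural simplification: the relaxed Douglas-Rachford step collapses to a $\lambda$-averaging of $R_{B_1}$ with the identity, composed with $R_{B_2}$.

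The estimate then chains the two hypotheses. For $y\in\Lambda_2$ and $x\in U_2$, I pick compatible selections $x_2\in R_{B_2}x$, $y_2\in R_{B_2}y$ and set $nx=\lambda(R_{B_1}x_2)+(1-\lambda)x_2$, $ny=\lambda(R_{B_1}y_2)+(1-\lambda)y_2$. By the inclusions $R_{B_2}U_2\subseteq U_1$ and $R_{B_2}\Lambda_2\subseteq\Lambda_1$ we have $x_2\in U_1$ and $y_2\in\Lambda_1$, so the pointwise bound for $R_{B_1}$ is legitimately invoked at the reference point $y_2\in\Lambda_1$. A triangle inequality followed by the two reflector bounds then gives
\begin{equation*}
\|nx-ny\|\le\lambda\|R_{B_1}x_2-R_{B_1}y_2\|+(1-\lambda)\|x_2-y_2\|\le\bigl(\lambda\sqrt{1+\tilde\epsilon_1}+1-\lambda\bigr)\sqrt{1+\tilde\epsilon_2}\,\|x-y\|,
\end{equation*}
using $R_{B_1}$ almost nonexpansive with violation $\tilde\epsilon_1$ at $y_2\in\Lambda_1$ on $U_1$, together with $\|x_2-y_2\|\le\sqrt{1+\tilde\epsilon_2}\,\|x-y\|$ from $R_{B_2}$. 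Squaring identifies $(\lambda\sqrt{1+\tilde\epsilon_1}+1-\lambda)^2(1+\tilde\epsilon_2)-1$ as exactly $2\tilde\epsilon$, so $N$ is almost nonexpansive with violation $2\tilde\epsilon$, and the equivalence from the first step completes the proof.

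The computation is short, and the only genuine obstacle is bookkeeping with the set-valued selections: I must ensure the \emph{same} selection $x_2\in R_{B_2}x$ feeds both the $\Id$ term and the $R_{B_1}$ term of $N$, and that the base point $y$ lies in $\Lambda_2$ while its image $y_2$ lands in $\Lambda_1$, so that each ``pointwise at $\Lambda_j$'' bound is applied at an admissible reference point; this is precisely what the inclusion hypotheses on $U_j$ and $\Lambda_j$ deliver. I would also record the sanity checks $\lambda=0$ (recovering $P_{B_2}$ as a$\tfrac12$-fne with violation $\tilde\epsilon_2/2$) and $\lambda=1$ (recovering the standard Douglas-Rachford violation $(1+\tilde\epsilon_1)(1+\tilde\epsilon_2)-1$), both consistent with the stated formula for $\tilde\epsilon$.
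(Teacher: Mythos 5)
Your proof is correct and follows essentially the same route as the paper's: both reduce the claim to almost nonexpansiveness of $2T-\Id$ with violation $2\tilde\epsilon$, derive the factorization $2T-\Id=\bigl(\lambda R_{B_1}+(1-\lambda)\Id\bigr)R_{B_2}$, and chain the two reflector bounds through the inclusions $R_{B_2}\Lambda_2\subseteq\Lambda_1$, $R_{B_2}U_2\subseteq U_1$. The only difference is that you prove the equivalence between the a$\alpha$-fne property of $T$ (with $\alpha=1/2$) and almost nonexpansiveness of $2T-\Id$ inline via the parallelogram law, whereas the paper cites this as a known lemma from its companion work.
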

\begin{proof}
Let $y\in \Lambda_{2}$ and  $x\in U_{2}$.
By \cite[Lemma 2.3]{DinhJansenLuke2024CDRl}, it is sufficient to prove that $\tilde{T}:= 2T-\Id$
is pointwise almost nonexpensive at $y$ with violation $2\tilde\epsilon$ on $U_{2}$.
Starting with \eqref{eq:def.DRl} we derive an equivalent representation for
$\tilde{T}$:
\begin{align}\label{e:TR/ST}
    \tilde{T} &=2\left(\frac{\lambda}{2}(R_{B_1} R_{B_{2}} +\Id)+(1-\lambda)P_{B_{2}}  \right)-\Id\\
&=\lambda (R_{B_1} R_{B_{2}} +\Id) +2(1-\lambda)P_{B_{2}} -\Id \\
&=\lambda R_{B_1} R_{B_{2}} + 2(1-\lambda)P_{B_{2}} + (\lambda-1)\Id\\
&=\lambda R_{B_1} R_{B_{2}} + (1-\lambda)(2P_{B_{2}}-\Id)\\
&=\lambda R_{B_1} R_{B_{2}} + (1-\lambda)R_{B_{2}}\\
&=\left(\lambda R_{B_1}+ (1-\lambda)\Id\right) R_{B_{2}}.
\end{align}
Let $y^+\in \tilde{T}y$ and $x^+\in \tilde{T}x$. We claim that
\begin{equation}
    \|y^+-x^+\|\leq\sqrt{1+2\tilde\epsilon}\|x-y\|.
\end{equation}
Indeed, let $u\in R_{B_{2}}y$, $z\in R_{B_{2}}x$, $u'\in R_{B_1}u$, $z'\in R_{B_1}z$ so
that by \eqref{e:TR/ST}
\begin{align*}
\tilde{T}y &= \left(\lambda R_{B_1}+ (1-\lambda)\Id\right) R_{B_{2}}y\ni
\lambda u' + (1-\lambda)u,\\
\tilde{T}x &= \left(\lambda R_{B_1}+ (1-\lambda)\Id\right) R_{B_{2}}x
\ni \lambda z' + (1-\lambda)z.
\end{align*}
By assumption, $u\in R_{B_{2}}\Lambda_{2}\subseteq \Lambda_1$
and $z\in R_{B_{2}}U_{2}\subseteq U_1$.
Since $R_{B_{2}}$
is pointwise almost nonexpansive with
violation $\tilde \epsilon_{2}$  at each point in $\Lambda_{2}$ on $U_{2}$, $R_{B_{2}}$ is also single-valued
at every point in $\Lambda_{2}$ so we can write $u = R_{B_{2}}y$,  and
\begin{subequations}\label{eq:nonexp}
\begin{equation}
    \|z-u\|\leq \sqrt{1+\tilde\epsilon_{2}}\|x-y\|.
\end{equation}
By the same argument, $R_{B_1}$ is pointwise almost nonexpansive (and hence single-valued)
with violation $\tilde \epsilon_1$ at all points in $\Lambda_1$ on
$U_1$. We can therefore write $u'=R_{B_1}u$, and
\begin{equation}
    \|z'-u'\|\leq \sqrt{1+\tilde\epsilon_1}\|z-u\|.
\end{equation}
\end{subequations}
Let $u'' = \lambda u'+(1-\lambda)u $ and $z'' = \lambda z'+(1-\lambda)z$.
Then $u''\in\tilde{T}y$ and $z''\in \tilde{T}x$. We have
\begin{align*}
    \|z''-u''\| &=\|\lambda z'+(1-\lambda)z-\lambda u'-(1-\lambda)u  \|\\
    &\leq \lambda\|z'-u'\| +(1-\lambda)\|z-u\|\\
    &\leq \lambda \sqrt{1+\tilde\epsilon_1}\|z-u\| +(1-\lambda)\|z-u\|\\
    &= \left(\lambda\sqrt{1+\tilde\epsilon_1}+1-\lambda\right)\|z-u\| \\
    &\leq \left(\lambda\sqrt{1+\tilde\epsilon_1}+1-\lambda\right)\sqrt{1+\tilde\epsilon_{2}}\|x-y\|\\
    &= \sqrt{1+2\tilde\epsilon}\|x-y\|.
\end{align*}
The first inequality follows from the triangle inequality.
The other  inequalities are a consequence of \eqref{eq:nonexp}. 
Since $y\in\Lambda_{2}$ and $x\in U_{2}$ are arbitrary, the result follows.
\end{proof}

\begin{lemma}(Pointwise almost nonexpansiveness of projectors/reflectors)\label{lem:pane.refectors}

\begin{enumerate}[(i)]
    \item\label{lem:pane.refectors i} Let  Assumption \ref{a:1} \eqref{a:1,b}, \eqref{a:1,d'} hold.
For $j=1,\dots,5$, $R_{A_{j}}$
is pointwise almost nonexpansive with
violation $\tilde \epsilon_{j}$  at each point in $\Lambda_{j}$ on $U_{j}$ with $\tilde \epsilon_{1}=\tilde\epsilon_{2}=\tilde\epsilon_{3}=\tilde \epsilon_{4}=0$ and $\tilde \epsilon_{5}=8\epsilon_{U_5}(1 + \epsilon_{U_5})/(1 -\epsilon_{U_5})^2$.
\item\label{lem:pane.refectors ii} Let  Assumption \ref{a:1} \eqref{a:1,b}, \eqref{a:1,f} hold.
For $j=1,\dots,5$, $P_{A_{j}}$
is pointwise a$\alpha$-fne with
violation $\check\epsilon_{j}$ and constant $\alpha_j=\frac{1}{2}$ at each point in $\Lambda_{j}$ on $U_{j}$ with $\check \epsilon_{1}=\check\epsilon_{2}=\check\epsilon_{3}=\check \epsilon_{4}=0$ and $\check \epsilon_{5}=4\epsilon_{U_5}(1 + \epsilon_{U_5})/(1 -\epsilon_{U_5})^2$.
\end{enumerate}
 
\end{lemma}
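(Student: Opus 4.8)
The plan is to prove both statements set by set, grouping the indices into the convex sets $j\in\{1,3,4\}$ (namely $\SYM,\SUPP,\LF$), the sparse--real set $j=2$ ($\SR$), and the measurement set $j=5$ ($\M$), and then to obtain the reflector bounds of part \eqref{lem:pane.refectors i} from the projector bounds of part \eqref{lem:pane.refectors ii} rather than re-deriving them. The only genuine work sits in the case $j=5$; everything else is convexity, a direct appeal to Lemma \ref{lem:prox.regular.sets}, and a clean projector-to-reflector passage.

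For $A_1=\SYM$, $A_3=\SUPP$, $A_4=\LF$, convexity (Lemma \ref{lem:prox.regular.sets}\eqref{lem:prox.regular.sets ii}) makes each projector firmly nonexpansive on all of $\mathbb C^N$, which is precisely pointwise a$\alpha$-fne with $\alpha=1/2$ and violation $\check\epsilon_j=0$, and makes each reflector $R_{A_j}$ nonexpansive, i.e. almost nonexpansive with $\tilde\epsilon_j=0$; here $U_j=\Lambda_j=\mathbb C^N$ as fixed in Assumption \ref{a:1}. For $A_2=\SR$, both required facts are already supplied by Lemma \ref{lem:prox.regular.sets}\eqref{lem:prox.regular.sets iii}: $P_{\SR}$ is pointwise $\alpha$-fne with $\alpha=1/2$ and violation $0$, and $R_{\SR}$ is pointwise nonexpansive with violation $0$, on the sets $\check U,\check\Lambda$ that I identify with $U_2,\Lambda_2$ of Assumption \ref{a:1}\eqref{a:1,b}. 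This settles $j\in\{1,2,3,4\}$ with vanishing violations in both parts.

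The core is $j=5$. By Assumption \ref{a:1}\eqref{a:1,b}, $A_5=\M$ is $\epsilon$-super-regular at a distance relative to $\Lambda_5$ with constant $\epsilon_{U_5}$ on $U_5$, and I would feed this into the same set-to-projector transfer of \cite{russell2018quantitative} (via \cite[Proposition 3.4(ii)]{luke2018relaxed}) that was used in Lemma \ref{lem:prox.regular.sets}\eqref{lem:prox.regular.sets iii}, but now with a strictly positive regularity constant. This yields that $P_{\M}$ is pointwise a$\alpha$-fne with $\alpha=1/2$ and violation $\check\epsilon_5=4\epsilon_{U_5}(1+\epsilon_{U_5})/(1-\epsilon_{U_5})^2$ on $U_5$. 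Condition \eqref{a:1,f} is exactly what keeps this meaningful: a one-line computation shows $4\epsilon(1+\epsilon)/(1-\epsilon)^2=1$ at $\epsilon=2\sqrt{3}/3-1$, so $\epsilon_{U_5}<2\sqrt{3}/3-1$ forces $\check\epsilon_5\in[0,1)$. This proves part \eqref{lem:pane.refectors ii}.

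For part \eqref{lem:pane.refectors i} I would avoid repeating the regularity argument and instead pass from projector to reflector via $R_\M=2P_\M-\Id$, using the projector-to-reflector conversion \cite[Proposition 2.3(ii)]{russell2018quantitative} (equivalently \cite[Lemma 2.3]{DinhJansenLuke2024CDRl}, as exploited in the proof of Lemma \ref{lem:Tfirmly.nonep}): a pointwise a$\alpha$-fne mapping with $\alpha=1/2$ and violation $\check\epsilon$ has reflector $2(\cdot)-\Id$ pointwise almost nonexpansive with violation $2\check\epsilon$. Applied to $P_\M$ with violation $\check\epsilon_5$, this gives $R_\M$ almost nonexpansive with violation $\tilde\epsilon_5=2\check\epsilon_5=8\epsilon_{U_5}(1+\epsilon_{U_5})/(1-\epsilon_{U_5})^2$, and the more restrictive condition \eqref{a:1,d'} is precisely the bound guaranteeing $\tilde\epsilon_5<1$, with equality at $\epsilon_{U_5}=4\sqrt{2}/7-5/7$. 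The main obstacle is thus entirely localized to the $j=5$ projector step: one must invoke the \cite{russell2018quantitative}/\cite{luke2018relaxed} transfer with a nonzero constant and carry the violation through exactly as $4\epsilon(1+\epsilon)/(1-\epsilon)^2$; the doubling to the reflector and the two endpoint checks for \eqref{a:1,f} and \eqref{a:1,d'} are then routine.
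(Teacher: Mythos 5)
Your proof is correct and follows essentially the same route as the paper's: convexity disposes of $j=1,3,4$, Lemma \ref{lem:prox.regular.sets}\eqref{lem:prox.regular.sets iii} disposes of $j=2$, and Assumption \ref{a:1}\eqref{a:1,b} fed into the super-regularity-to-projector transfer gives the $j=5$ violation $4\epsilon_{U_5}(1+\epsilon_{U_5})/(1-\epsilon_{U_5})^2$, with \eqref{a:1,f} and \eqref{a:1,d'} being exactly the thresholds that keep $\check\epsilon_5$ and $\tilde\epsilon_5$ below $1$. The only organizational difference is that the paper invokes a single result, \cite[Lemma 2.5]{DinhJansenLuke2024CDRl}, which delivers the projector and reflector violations simultaneously for $j=1,3,4,5$, whereas you recover the reflector violation $\tilde\epsilon_5=2\check\epsilon_5$ from the projector one via $R_{\M}=2P_{\M}-\Id$ --- a valid step for $\alpha=1/2$, being precisely the equivalence \cite[Lemma 2.3]{DinhJansenLuke2024CDRl} that the paper itself uses in the proof of Lemma \ref{lem:Tfirmly.nonep}.
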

\begin{proof}
By Lemma \ref{lem:prox.regular.sets} \eqref{lem:prox.regular.sets ii}, the sets $\SYM$, $\SUPP$ and $\LF$ are convex, so they are everywhere $\epsilon$-superregular at a distance with
$\epsilon=0$. 
In other words, for $j=1,3,4$, $A_j$ is $\epsilon$-super-regular at a distance at any  $x_j$ relative to $\Lambda_j$
on any neighborhood $U_j$
 with constant $\epsilon_{U_j}=0$.

By Lemma \ref{lem:prox.regular.sets} \eqref{lem:prox.regular.sets i}, the set $M$ is prox regular and hence there is a neighborhood where it is super-regular at a distance;
by assumption, $U_5$ is such a neighborhood.

For $j=1,3,4,5$, 
set $\tilde\epsilon_j:= 8\epsilon_{U_j}(1 + \epsilon_{U_j})/(1 -\epsilon_{U_j})^2$ and $\check\epsilon_j:= 4\epsilon_{U_j}(1 + \epsilon_{U_j})/(1 -\epsilon_{U_j})^2$.
Then, $\tilde\epsilon_1=\tilde\epsilon_3=\tilde\epsilon_4=0$ and $\check\epsilon_1=\check\epsilon_3=\check\epsilon_4=0$ since $\epsilon_{U_j}=0$ for $j=1,3,4$.
The result follows from \cite[Lemma 2.5]{DinhJansenLuke2024CDRl} since $A_j$'s are  $\epsilon$-super-regularity  at a distance of  at   $x_j$ relative to $\Lambda_j$
on the neighborhood $U_j$
 with constant $\epsilon_{U_j}$, for $j=1,3,4,5$.
 
For $j=2$, $A_2=\SR$ and by Lemma \ref{lem:prox.regular.sets} \eqref{lem:prox.regular.sets iii}, there is a neighborhood on which the reflector $R_{A_2}$ is
pointwise almost nonexpansive (with
violation $\tilde \epsilon_{2}=0$), and projector $P_{A_2}$ is pointwise $\alpha$-fne with $\alpha=1/2$, and $U_2$ together with $\Lambda_2$ are by assumption the sets where this holds.
\end{proof}

\begin{lemma}\label{t:T_orbit3D aalpha-fne}
Let $\lambda\in (0,1)$ and let $\Fix T^{\text{CDR$\lambda$}}_\text{orbit3D}\neq \emptyset$.
Under Assumption \ref{a:1} \eqref{a:1,b}, \eqref{a:1,c}, \eqref{a:1,d}, \eqref{a:1,d'},
$T^{\text{CDR$\lambda$}}_\text{orbit3D}$ is pointwise a$\alpha$-fne at all $u\in \Fix T^{\text{CDR$\lambda$}}_\text{orbit3D}$
with
\begin{equation}\label{e:CDRl alpha-epsilon}
\overline\alpha=5/6\quad\mbox{and} \quad\overline\epsilon=\left(1+\epsilon_{4,5}\right)\left(1+\epsilon_{5,6}\right) - 1 \in [0,1),
\end{equation}
where
\begin{equation}
    \epsilon_{j,j+1}:=\begin{cases}
                       \tfrac12\tilde{\epsilon_5}&\mbox{ if }j=4,\\
                       \tfrac12\left[\left(\lambda\sqrt{1+\tilde\epsilon_5}+1-\lambda\right)^2-1 \right]&\mbox{ if } j=5,
                      \end{cases}
%
\end{equation}
for
$\tilde{\epsilon}_5= 8\epsilon_{U_5}(1 + \epsilon_{U_5})/(1 -\epsilon_{U_5})^2$ with
$\epsilon_{U_5}$ the constant of $\epsilon$-superregularity at a distance of the set $M$ on $U_5$ in the
range given by Assumption \ref{a:1}\eqref{a:1,d'}.
\end{lemma}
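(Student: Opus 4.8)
The plan is to recognize $T^{\text{CDR$\lambda$}}_\text{orbit3D}$ as the five-fold composition $T_{1,2}\circ T_{2,3}\circ T_{3,4}\circ T_{4,5}\circ T_{5,6}$ of the two-set relaxed Douglas--Rachford operators $T_{j,j+1}:=T^{\DR\lambda}_{A_j,A_{j+1}}$ (with the convention $A_6\equiv A_1=\SYM$), to certify each factor as pointwise a$\alpha$-fne through Lemma \ref{lem:Tfirmly.nonep}, and then to invoke the composition result for pointwise a$\alpha$-fne mappings from \cite{DinhJansenLuke2024CDRl}. The nonemptiness hypothesis $\Fix T^{\text{CDR$\lambda$}}_\text{orbit3D}\neq\emptyset$ supplies a point $u$ at which all the pointwise statements are anchored; throughout, the test sets $\Lambda_j$ and neighborhoods $U_j$ are those of Assumption \ref{a:1}, all equal to $\mathbb C^N$ except for $j=2,5$.

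First I would invoke Lemma \ref{lem:pane.refectors}\eqref{lem:pane.refectors i}, under Assumption \ref{a:1}\eqref{a:1,b},\eqref{a:1,d'}, to conclude that each reflector $R_{A_j}$ is pointwise almost nonexpansive on $U_j$ at every point of $\Lambda_j$ with violation $\tilde\epsilon_j$, where $\tilde\epsilon_1=\tilde\epsilon_2=\tilde\epsilon_3=\tilde\epsilon_4=0$ and $\tilde\epsilon_5=8\epsilon_{U_5}(1+\epsilon_{U_5})/(1-\epsilon_{U_5})^2$. For each $j$ the factor $T_{j,j+1}$ plays the role of $T$ in Lemma \ref{lem:Tfirmly.nonep} with $B_1=A_j$ and $B_2=A_{j+1}$, so I must verify $R_{A_{j+1}}\Lambda_{j+1}\subseteq\Lambda_j$ and $R_{A_{j+1}}U_{j+1}\subseteq U_j$. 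For $j\in\{1,3,4\}$ these hold automatically because $\Lambda_j=U_j=\mathbb C^N$, while for $j=2$ and $j=5$ they are precisely the two clauses of Assumption \ref{a:1}\eqref{a:1,c}. Lemma \ref{lem:Tfirmly.nonep} then gives that $T_{j,j+1}$ is pointwise a$\alpha$-fne on $U_{j+1}$ at each point of $\Lambda_{j+1}$ with $\alpha=1/2$ and violation
\begin{equation*}
\epsilon_{j,j+1}=\tfrac12\left[\left(\lambda\sqrt{1+\tilde\epsilon_j}+1-\lambda\right)^2(1+\tilde\epsilon_{j+1})-1\right].
\end{equation*}
Substituting the reflector violations yields $\epsilon_{1,2}=\epsilon_{2,3}=\epsilon_{3,4}=0$ and leaves exactly $\epsilon_{4,5}=\tfrac12\tilde\epsilon_5$ and $\epsilon_{5,6}=\tfrac12[(\lambda\sqrt{1+\tilde\epsilon_5}+1-\lambda)^2-1]$, the two cases recorded in the statement.

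Next I would apply the composition lemma for pointwise a$\alpha$-fne operators. Its compatibility hypotheses --- that the image of each factor lands in the domain on which the following factor is certified --- are supplied by Assumption \ref{a:1}\eqref{a:1,d} ($T_{2,3}\Lambda_3\subseteq\Lambda_2$, $T_{2,3}U_3\subseteq U_2$, $T_{5,6}\Lambda_6\subseteq\Lambda_5$, $T_{5,6}U_6\subseteq U_5$), the remaining links being trivial because the target is $\mathbb C^N$. Composing five factors each with $\alpha=1/2$, the averagedness accumulates through $\sum_{j=1}^{5}\tfrac{\alpha}{1-\alpha}=5$ (each term being $1$), giving $\overline\alpha=(1+1/5)^{-1}=5/6$, while the violations accumulate multiplicatively, $\overline\epsilon=\prod_{j=1}^5(1+\epsilon_{j,j+1})-1=(1+\epsilon_{4,5})(1+\epsilon_{5,6})-1$, since three of the factors contribute $1$.

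I expect two places to require the most care. The first is the bookkeeping of the nested neighborhoods and test sets: Lemma \ref{lem:Tfirmly.nonep} only certifies $T_{j,j+1}$ on its ``input'' pair $(\Lambda_{j+1},U_{j+1})$, so one must check that these pairs chain compatibly along the whole composition --- which is exactly what the inclusions in Assumption \ref{a:1}\eqref{a:1,c},\eqref{a:1,d} are engineered to guarantee. The second, and the genuine crux, is the final assertion $\overline\epsilon\in[0,1)$. Nonnegativity is immediate, since $\lambda\sqrt{1+\tilde\epsilon_5}+1-\lambda\geq1$ forces $\epsilon_{5,6}\geq0$ and $\epsilon_{4,5}=\tfrac12\tilde\epsilon_5\geq0$. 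For the upper bound, observe that solving $8t(1+t)/(1-t)^2=1$ gives the root $t=(4\sqrt2-5)/7$, so the admissible range in Assumption \ref{a:1}\eqref{a:1,d'} is exactly the condition $\tilde\epsilon_5<1$; the bound on $\overline\epsilon$ then reduces to an explicit estimate of the product $(1+\tfrac12\tilde\epsilon_5)\bigl(1+\tfrac12[(\lambda\sqrt{1+\tilde\epsilon_5}+1-\lambda)^2-1]\bigr)$ under $\tilde\epsilon_5<1$ and $\lambda\in(0,1)$, and carrying out this estimate --- tracking how the single nonzero reflector violation $\tilde\epsilon_5$ propagates through the two factors that involve $\M$ --- is the main obstacle.
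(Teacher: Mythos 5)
Your proposal follows the paper's proof essentially step for step: Lemma \ref{lem:pane.refectors}\eqref{lem:pane.refectors i} for the reflector violations, then Lemma \ref{lem:Tfirmly.nonep} applied factor-by-factor with the inclusions $R_{A_{j+1}}U_{j+1}\subseteq U_j$, $R_{A_{j+1}}\Lambda_{j+1}\subseteq\Lambda_j$ supplied by Assumption \ref{a:1}\eqref{a:1,c} (and trivially for $j\in\{1,3,4\}$), and finally the composition result (Lemma 2.4 of \cite{DinhJansenLuke2024CDRl}) under Assumption \ref{a:1}\eqref{a:1,d}, producing the identical $\overline\alpha=5/6$ and $\overline\epsilon=(1+\epsilon_{4,5})(1+\epsilon_{5,6})-1$. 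The only point you leave open, the verification that $\overline\epsilon\in[0,1)$, is likewise not carried out in the paper's own proof, which stops at the product formula; so your attempt is complete relative to the paper's argument, and your observation that this membership requires a genuine estimate (it does not follow from Assumption \ref{a:1}\eqref{a:1,d'} alone uniformly in $\lambda$, since $\tilde\epsilon_5\to 1$ and $\lambda\to 1$ drive the formula toward $5/4$) is a fair criticism of the statement rather than a gap in your proof.
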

\begin{proof}
The proof is very similar to the proof of \cite[Proposition 4.2]{DinhJansenLuke2024CDRl}.  The difference here
is that the regularity of the set $\SR$ has not been explicitly determined, so the statement of the theorem
in the present setting relies on slightly different assumptions than the statement of
\cite[Proposition 4.2]{DinhJansenLuke2024CDRl}.  The logic is as follows:  we first establish that the
reflectors $R_{j}$ are almost nonexpansive for $j=1,2,\dots,5$;  next we show that each
$T_{j, j+1}$ in Assumption \ref{a:1} \eqref{a:1,d} is a$\alpha$-fne with $\alpha_{j,j+1}=1/2$ and
a specified violation  $\epsilon_{j,j+1}$.  We then use the calculus of a$\alpha$-fne mappings to conclude
that the composition of the $T_{j,j+1}$ mappings, namely $T^{\text{CDR$\lambda$}}_\text{orbit3D}$, is a$\alpha$-fne
with the claimed violation $\overline{\epsilon}$ and constant $\overline{\alpha}$.

By Lemma \ref{lem:pane.refectors}\eqref{lem:pane.refectors i} (which requires Assumption \ref{a:1}\eqref{a:1,b} and \eqref{a:1,d'}),
for $j=1,\dots,5$, $R_{A_{j}}$
is pointwise almost nonexpansive with
violation $\tilde \epsilon_{j}$  at each point in $\Lambda_{j}$ on $U_{j}$. 

We claim that for $j=1,\dots,5$, the neighborhood $U_j$ and the set
 $\Lambda_j$ satisfy
$R_{A_{j+1}}U_{j+1}\subseteq U_{j}$ and $R_{A_{j+1}}\Lambda_{j+1}\subseteq \Lambda_{j}$.
By Assumption \ref{a:1} \eqref{a:1,c}, this holds for $j=2,5$.
For $j\in \{1,3,4\}$, the claim also holds since $U_{j}=\Lambda_j=\mathbb C^N$ in this case.

For $j=1,\dots,5$, set $\epsilon_{j,j+1}:=\frac{1}{2}\left[\left(\lambda\sqrt{1+\tilde\epsilon_{j}}+1-\lambda\right)^2(1+\tilde\epsilon_{j+1})-1\right]$.
If $j=4$, then $\epsilon_{4,5}=\frac{1}{2}\tilde \epsilon_5$, and
if $j=5$, then $\epsilon_{5,6}=\tfrac12\left[\left(\lambda\sqrt{1+\tilde\epsilon_5}+1-\lambda\right)^2-1 \right]$.
If $j=1,2,3$, then $\epsilon_{j,j+1}=0$.
By Lemma \ref{lem:Tfirmly.nonep}, the two-set relaxed Douglas-Rachford mapping $T_{j,j+1}$ defined in Assumption \ref{a:1} \eqref{a:1,d}
for $j=1,2,\dots,5$
is pointwise a$\alpha$-fne at all $y\in \Lambda_{j+1}$ with constant $\alpha_{j,j+1}=1/2$ and
violation $\epsilon_{j,j+1}$ on $U_{j+1}$.
This finishes the second stage of the proof.

For the final step of the proof, note that by \eqref{eq:CDRl_orbit3D}, the cyclic relaxed
Douglas-Rachford mapping $T^{\text{CDR$\lambda$}}_\text{orbit3D}$ is the composition of the $T_{j,j+1}$ mappings, i.e.,
$$T^{\text{CDR$\lambda$}}_\text{orbit3D}=T_{1,2}\circ T_{2,3}\circ T_{3,4}\circ T_{4,5}\circ T_{5,6},$$ 
where $T_{j,j+1}$ 
is pointwise a$\alpha$-fne at all $y\in \Lambda_{j+1}$ with constant $\alpha_{j,j+1}=1/2$ and
violation $\epsilon_{j,j+1}$ on $U_{j+1}$.
With $m=5$, Assumption \ref{a:1}\eqref{a:1,d} implies that  for $j=1,\dots,5$, $T_{j,j+1}\Lambda_{j+1}\subseteq \Lambda_{j}$ and
$T_{j,j+1}U_{j+1}\subseteq U_{j}$, which allows application of \cite[Lemma 2.4]{DinhJansenLuke2024CDRl} to conclude that
$T^{\text{CDR$\lambda$}}_\text{orbit3D}$ is pointwise
a$\alpha$-fne at all $y\in \Lambda_1=\mathbb C^N$ on $U_1=\mathbb C^N$ with
\begin{equation}\label{eq:violation.composite}
 \overline\alpha = \frac{m}{m+1}=\frac{5}{6}
\quad\mbox{ and violation }\quad
    \overline\epsilon=\prod_{j=1}^m(1+\epsilon_{j,j+1})-1=\left(1+\epsilon_{4,5}\right)\left(1+\epsilon_{5,6}\right) - 1.
\end{equation}
This completes the proof.
\end{proof}

\begin{lemma}\label{t:T_orbit3D CP aalpha-fne}
Let  $\Fix T^{\text{CP}}_\text{orbit3D}\neq \emptyset$.
Under Assumption \ref{a:1} \eqref{a:1,b}, \eqref{a:1,e}, \eqref{a:1,f},
$T^{\text{CP}}_\text{orbit3D}$ is pointwise a$\alpha$-fne at all $u\in \Fix T^{\text{CP}}_\text{orbit3D}$
with
\begin{equation}\label{e:CP alpha-epsilon}
\overline\alpha=5/6\quad\mbox{and} \quad\overline\epsilon=\check\epsilon_{5}  \in [0,1),
\end{equation}
where
$\check{\epsilon}_5= 4\epsilon_{U_5}(1+\epsilon_{U_5})/(1-\epsilon_{U_5})^2$.
\end{lemma}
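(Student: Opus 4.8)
The plan is to follow the same three-stage template as the proof of Lemma~\ref{t:T_orbit3D aalpha-fne}, but working directly with the projectors rather than with the two-set relaxed Douglas--Rachford operators; consequently the intermediate passage through Lemma~\ref{lem:Tfirmly.nonep} is unnecessary and the argument is strictly shorter. First I would invoke Lemma~\ref{lem:pane.refectors}\eqref{lem:pane.refectors ii}, whose hypotheses are precisely Assumption~\ref{a:1}\eqref{a:1,b} and \eqref{a:1,f}, to record that for $j=1,\dots,5$ the projector $P_{A_j}$ is pointwise a$\alpha$-fne with constant $\alpha_j=\tfrac12$ and violation $\check\epsilon_j$ at each point of $\Lambda_j$ on $U_j$, where $\check\epsilon_1=\check\epsilon_2=\check\epsilon_3=\check\epsilon_4=0$ and only $\check\epsilon_5=4\epsilon_{U_5}(1+\epsilon_{U_5})/(1-\epsilon_{U_5})^2$ is possibly nonzero.

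The second stage verifies the containment conditions needed to push the a$\alpha$-fne property through the composition $T^{\text{CP}}_\text{orbit3D}=P_{A_1}\circ P_{A_2}\circ P_{A_3}\circ P_{A_4}\circ P_{A_5}$. Since the factors are applied from right to left, the image of each inner factor must lie in the region $(\Lambda_j,U_j)$ on which the next (outer) factor $P_{A_j}$ is regular; concretely \cite[Lemma~2.4]{DinhJansenLuke2024CDRl} requires $P_{A_{j+1}}\Lambda_{j+1}\subseteq\Lambda_j$ and $P_{A_{j+1}}U_{j+1}\subseteq U_j$. Because $U_1=U_3=U_4=\Lambda_1=\Lambda_3=\Lambda_4=\mathbb C^N$, every such inclusion is automatic except the single one feeding $P_{A_3}$ into the regularity region of $P_{A_2}$, namely $P_{A_3}U_3\subseteq U_2$ and $P_{A_3}\Lambda_3\subseteq\Lambda_2$, which is exactly Assumption~\ref{a:1}\eqref{a:1,e}. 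This is the one piece of bookkeeping that must be matched to the correct hypothesis: in the cyclic-projections setting it is \eqref{a:1,e} that is used, in place of the reflector/operator inclusions \eqref{a:1,c}--\eqref{a:1,d} invoked for the CDR$\lambda$ operator.

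Finally I would apply the composition calculus \cite[Lemma~2.4]{DinhJansenLuke2024CDRl} with $m=5$ and $\alpha_j=\tfrac12$ for each factor, concluding that $T^{\text{CP}}_\text{orbit3D}$ is pointwise a$\alpha$-fne at every point of $\Lambda_1=\mathbb C^N$ (in particular at each $u\in\Fix T^{\text{CP}}_\text{orbit3D}$) with
\begin{equation*}
\overline\alpha=\frac{m}{m+1}=\frac56\qquad\text{and}\qquad \overline\epsilon=\prod_{j=1}^{5}\paren{1+\check\epsilon_j}-1.
\end{equation*}
Because $\check\epsilon_1=\cdots=\check\epsilon_4=0$, the product telescopes to $\overline\epsilon=\check\epsilon_5$, which is the stated value. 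To close, one checks $\overline\epsilon\in[0,1)$: the inequality $\check\epsilon_5<1$ is equivalent to $4\epsilon_{U_5}(1+\epsilon_{U_5})<(1-\epsilon_{U_5})^2$, i.e. $3\epsilon_{U_5}^2+6\epsilon_{U_5}-1<0$, whose relevant root is $2\sqrt3/3-1$; this is precisely the interval imposed by Assumption~\ref{a:1}\eqref{a:1,f}.

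I do not expect a genuine obstacle here, since the two substantive ingredients---regularity of the individual projectors, and the preservation of a$\alpha$-fne-ness under composition with the stated constant and violation---are already furnished by Lemma~\ref{lem:pane.refectors} and \cite[Lemma~2.4]{DinhJansenLuke2024CDRl}. The only points demanding care are the two just highlighted: correctly identifying that the lone nontrivial containment is governed by Assumption~\ref{a:1}\eqref{a:1,e}, and confirming that the algebraic threshold making $\overline\epsilon<1$ coincides exactly with the admissible range \eqref{a:1,f} for $\epsilon_{U_5}$.
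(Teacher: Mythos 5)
Your route is essentially the paper's: invoke Lemma \ref{lem:pane.refectors}\eqref{lem:pane.refectors ii} (under Assumption \ref{a:1}\eqref{a:1,b}, \eqref{a:1,f}) for the pointwise a$\alpha$-fne property of each projector, verify the containments required by the composition calculus, and apply \cite[Lemma 2.4]{DinhJansenLuke2024CDRl} with $m=5$ to obtain $\overline\alpha=5/6$ and $\overline\epsilon=\prod_{j=1}^{5}(1+\check\epsilon_j)-1=\check\epsilon_5$. Two of your details are actually more explicit than the paper's: the observation that all containments except $P_{A_3}U_3\subseteq U_2$, $P_{A_3}\Lambda_3\subseteq\Lambda_2$ (i.e.\ Assumption \ref{a:1}\eqref{a:1,e}) are automatic because the remaining $U_j,\Lambda_j$ equal $\mathbb C^N$, and the algebraic check that $\check\epsilon_5<1$ holds exactly when $\epsilon_{U_5}<2\sqrt3/3-1$, which is precisely the range \eqref{a:1,f}.

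There is, however, one genuine error in your final step: you conclude that $T^{\text{CP}}_\text{orbit3D}$ is pointwise a$\alpha$-fne at every point of $\Lambda_1=\mathbb C^N$. The composition calculus propagates the regularity region of the \emph{innermost} factor, not the outermost: a pair of points fed into the chain is first processed by $P_{A_5}=P_{\M}$, which is a$\alpha$-fne with violation $\check\epsilon_5$ only at points of $\Lambda_5$ on $U_5$; since $\M$ is nonconvex (merely prox-regular), no global statement is possible for $P_{\M}$, hence none is possible for the composite. The correct conclusion, and the one the paper draws, is that $T^{\text{CP}}_\text{orbit3D}$ is pointwise a$\alpha$-fne at all $y\in\Lambda_5$ on $U_5$. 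You were likely misled by the pattern of Lemma \ref{t:T_orbit3D aalpha-fne}, whose conclusion reads $\Lambda_1=\mathbb C^N$; but there the innermost factor is $T_{5,6}$, whose regularity region is $\Lambda_6=U_6=\mathbb C^N$, and only the coincidence $\Lambda_6=\Lambda_1$ makes the outermost index appear. Consequently your parenthetical ``in particular at each $u\in\Fix T^{\text{CP}}_\text{orbit3D}$'' does not come for free: one needs the fixed points of interest to lie in $\Lambda_5\cap U_5$ — a point the paper's proof also leaves implicit, but which your (incorrect) global claim erases entirely.
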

\begin{proof}
By Lemma \ref{lem:pane.refectors}\eqref{lem:pane.refectors ii} (which requires Assumption \ref{a:1}\eqref{a:1,b} and \eqref{a:1,f}), for $j=1,\dots,5$, $P_{A_j}$
is pointwise a$\alpha$-fne with
violation $\check\epsilon_{j}$ and constant $\alpha_j=\frac{1}{2}$ at each point in $\Lambda_{j}$ on $U_{j}$ with $\check \epsilon_{1}=\check\epsilon_{2}=\check\epsilon_{3}=\check \epsilon_{4}=0$ and $\check \epsilon_{5}=4\epsilon_{U_5}(1 + \epsilon_{U_5})/(1 -\epsilon_{U_5})^2$ where $\epsilon_{U_5}$ is in the range given by Assumption \ref{a:1}\eqref{a:1,f}.

By \eqref{eq:first.CP}, the cyclic projection mapping $T^{\text{CP}}_\text{orbit3D}$ is the composition of the $P_{A_j}$ mappings, i.e.,
$$T^{\text{CP}}_\text{orbit3D}=P_{A_1}\circ P_{A_2}\circ P_{A_3}\circ P_{A_4}\circ P_{A_5}.$$
With $m=5$, Assumption \ref{a:1}\eqref{a:1,e} implies that  for $j=1,2,3,4$, $P_{A_{j+1}}U_{j+1}\subseteq U_{j}$ and $P_{A_{j+1}}\Lambda_{j+1}\subseteq \Lambda_{j}$, which allows application of \cite[Lemma 2.4]{DinhJansenLuke2024CDRl} to conclude that
$T^{\text{CP}}_\text{orbit3D}$ is pointwise
a$\alpha$-fne at all $y\in \Lambda_5$ on $U_5$ with
\begin{equation}\label{eq:violation.composite.CP}
 \overline\alpha = \frac{m}{m+1}=\frac{5}{6}
\quad\mbox{ and violation }\quad
    \overline\epsilon=\prod_{j=1}^m(1+\check\epsilon_{j})-1=\left(1+\check\epsilon_{5}\right) - 1=\check\epsilon_{5}.
\end{equation}
This completes the proof.
\end{proof}

\begin{lemma}\label{lem:pane.reflector.product}
Let $A_j$ be subset of $\mathbb E$ ($j=1,\dots,m$) such that $R_{A_j}$ is pointwise almost nonexpansive
with violation $\tilde \epsilon_j$  at all points in $\Lambda_j$ on
$U_j$. Set $\Lambda:={\Lambda_1\times \dots \times \Lambda_m}$ and $U:={U_1\times \dots \times U_m}$.
Then $R_{A_1\times \dots \times A_m}$ is pointwise almost nonexpansive
with violation $\tilde \epsilon=\max\limits_{j=1,\dots,m}\tilde \epsilon_j$  at all points in $\Lambda$ on
$U$.
\end{lemma}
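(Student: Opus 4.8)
The plan is to exploit the fact that the metric projector onto a Cartesian product decouples into the coordinate projectors, so that the same holds for the reflector, after which the estimate reduces to a coordinatewise application of the hypothesis together with a maximum bound. Writing $A:=A_1\times\dots\times A_m$, the target is to show that for every $\mathbf y\in\Lambda$, $\mathbf x\in U$ and every pair of selections $\mathbf x^+\in R_A\mathbf x$, $\mathbf y^+\in R_A\mathbf y$,
\[\norm{\mathbf x^+-\mathbf y^+}^2\le(1+\tilde\epsilon)\norm{\mathbf x-\mathbf y}^2,\qquad\tilde\epsilon=\max_{j=1,\dots,m}\tilde\epsilon_j,\]
which is the squared form of pointwise almost nonexpansiveness with violation $\tilde\epsilon$.

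First I would establish the coordinate decomposition. With $\mathbf x=(x_1,\dots,x_m)$, the identity $\norm{\mathbf a-\mathbf x}^2=\sum_{j=1}^m\norm{a_j-x_j}^2$ on the product space shows that minimizing $\norm{\mathbf a-\mathbf x}$ over $\mathbf a\in A$ decouples across the coordinates, whence $P_A\mathbf x=P_{A_1}x_1\times\dots\times P_{A_m}x_m$ as a product of sets. Since $R_A=2P_A-\Id$ acts coordinatewise, this yields $R_A\mathbf x=R_{A_1}x_1\times\dots\times R_{A_m}x_m$; in particular any $\mathbf x^+\in R_A\mathbf x$ has the form $(x_1^+,\dots,x_m^+)$ with $x_j^+\in R_{A_j}x_j$, and likewise for $\mathbf y^+$.

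Next I would apply the hypothesis coordinate by coordinate. Membership $\mathbf y\in\Lambda=\Lambda_1\times\dots\times\Lambda_m$ and $\mathbf x\in U=U_1\times\dots\times U_m$ means $y_j\in\Lambda_j$ and $x_j\in U_j$ for each $j$, so pointwise almost nonexpansiveness of $R_{A_j}$ at $y_j$ on $U_j$ gives $\norm{x_j^+-y_j^+}^2\le(1+\tilde\epsilon_j)\norm{x_j-y_j}^2$. Summing over $j$ and bounding each factor by $1+\tilde\epsilon$ yields
\[\norm{\mathbf x^+-\mathbf y^+}^2=\sum_{j=1}^m\norm{x_j^+-y_j^+}^2\le(1+\tilde\epsilon)\sum_{j=1}^m\norm{x_j-y_j}^2=(1+\tilde\epsilon)\norm{\mathbf x-\mathbf y}^2,\]
which is exactly the claim; moreover $\tilde\epsilon\in[0,1)$ because each $\tilde\epsilon_j\in[0,1)$.

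I do not expect a deep obstacle here: the argument is essentially bookkeeping. The one point needing care is the set-valued nature of the decomposition. Because each $R_{A_j}$ may be multivalued, I must state the product rule for $P_A$ (and hence $R_A$) at the level of sets and verify that every selection from $R_A\mathbf x$ is a tuple of selections from the $R_{A_j}x_j$, and conversely, so that the estimate holds for \emph{all} admissible selections rather than for one distinguished choice. Once this product rule is in place, the coordinatewise application of the hypothesis and the passage to the maximum are immediate.
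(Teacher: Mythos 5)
Your proposal is correct and follows essentially the same route as the paper's own proof: decompose $P_{A_1\times\dots\times A_m}$ (hence $R_{A_1\times\dots\times A_m}$) coordinatewise, apply the hypothesis in each coordinate, and bound the sum by pulling out $\max_j(1+\tilde\epsilon_j)$. The only cosmetic difference is that you work with squared norms throughout while the paper takes square roots at the end; the set-valued bookkeeping you flag is handled in the paper exactly as you describe.
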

\begin{proof}
Let $\mathbf{x}=(x_1,\dots,x_m)\in\Ebb^m$. We have 
\begin{equation}
P_{A_1\times\dots\times A_m}\mathbf{x} = P_{A_1\times\dots \times A_m}(x_1,\dots,x_m)=P_{A_1}x_1\times\dots\times P_{A_m}x_m.
\end{equation}
It implies that 
\begin{equation}\label{eq:reflector.product}
\begin{array}{rl}
     R_{A_1\times\dots\times A_m}\mathbf{x}& =2P_{A_1\times\dots\times A_m}\mathbf{x}-\mathbf{x}\\
     &= 2P_{A_1}x_1\times\dots\times P_{A_m}x_m-(x_1,\dots,x_m)\\
     &= (2P_{A_1}x_1-x_1)\times\dots\times (2P_{A_m}x_m-x_m)\\
     &=R_{A_1}x_1\times\dots\times R_{A_m}x_m.
\end{array}
\end{equation}
By assumption, we have
\begin{equation}\label{eq:panexp}
         \|x^+_j-y^+_j\|\leq\sqrt{1+\tilde\epsilon_j}\|x_j-y_j\|,\ \quad \ \forall x_j\in U_j,\  x^+_j\in R_{A_j}x_j,\ \forall y_j\in \Lambda_j,\ y^+_j\in R_{A_j}y_j.
\end{equation}
Let $\mathbf{x}=(x_1,\dots,x_m)\in U=U_1\times\dots\times U_m$ and $\mathbf{y}=(y_1,\dots,y_m)\in \Lambda=\Lambda_1\times\dots\times \Lambda_m$. Let $\mathbf{x^+}\in R_{A_1\times\dots\times A_m}\mathbf x$ and $\mathbf{y^+}\in R_{A_1\times\dots\times A_m}\mathbf y$. Then, by \eqref{eq:reflector.product}, $\mathbf{x^+}=(x^+_1,\dots,x^+_m)$ and $\mathbf{y^+}=(y^+_1,\dots,y^+_m)$ with $x^+_j\in R_{A_j}x_j$ and $y^+_j\in R_{A_j}y_j$.
We have
\begin{equation}
    \begin{array}{rl}
        \|\mathbf{x^+}-\mathbf{y^+}\| &=\sqrt{\sum_{j=1}^{m}\|x^+_j-y^+_j\|^2}  \\
         &\leq \sqrt{\sum_{j=1}^{m}(1+\tilde\epsilon_j)\|x_j-y_j\|^2}, \quad\quad \mbox{ (by \eqref{eq:panexp})}\\
         &\leq \sqrt{\max\limits_{j=1,\dots,m}{(1+\tilde\epsilon_j)}\sum_{j=1}^{m}\|x_j-y_j\|^2}\\
         &=\sqrt{(1+\max\limits_{j=1,\dots,m}\tilde\epsilon_j)\sum_{j=1}^{m}\|x_j-y_j\|^2}\\
         &= \sqrt{(1+\tilde \epsilon)}\sqrt{\sum_{j=1}^{m}\|x_j-y_j\|^2}\\
         &= \sqrt{(1+\tilde \epsilon)}\|\mathbf{x}-\mathbf{y}\|.
    \end{array}
\end{equation}
This concludes the statement.
\end{proof}

The following general linear convergence result relies on two properties of the fixed point mapping $T$, generally described as
{\em almost quasi-contractivity} and {\em stability}.
\begin{assumption}[regularity]\label{ass:regularity}
  Let $T:\Lambda\rightrightarrows \Lambda$ for $\Lambda\subseteq\Ebb$.  Let $U\subset\Ebb$
  with $U\cap\Lambda\neq\emptyset$.
  The following assumptions hold.
  \begin{enumerate}[(a)]
  \item \label{t:msr convergence c}  (Existence) There is at least one
  $\xbar \in \Fix T\cap \Lambda$.
  \item\label{t:msr convergence b} (Stability) There exists a $\kappa>0$ such that
   \begin{equation}\label{e:msr}
    \dist(x,\Fix T\cap \Lambda)\leq \kappa\dist(x, Tx)\quad\forall x\in U\cap \Lambda.
   \end{equation}
  \item\label{t:msr convergence a} (Almost quasi-contractivity) $T$ is pointwise a$\alpha$-fne at all $y\in \Fix T\cap\Lambda$,
  that is, $T$ satisfies
  \begin{eqnarray}\label{eq:paafne}
    &&\exists \epsilon\in [0,1), \alpha\in (0,1):\quad \forall x\in U, \forall y\in \Fix T\cap \Lambda\\
    &&\quad\norm{T x - y}^2\leq
    (1+\epsilon)\|x-y\|^2 -
    \tfrac{1-\alpha}{\alpha}\norm{x - T x}^2.
    \nonumber
  \end{eqnarray}
\end{enumerate}
\end{assumption}

\begin{assumption}\label{a:msr convergence}
\begin{equation}\label{eq:theta linear}
 0< \gamma\equiv \sqrt{1+\epsilon-\frac{1-\alpha}{\alpha\kappa^2}}<1\quad\iff\quad
\sqrt{\tfrac{1-\alpha}{\alpha(1+\epsilon)}}\leq  \kappa\leq \sqrt{\tfrac{1-\alpha}{\alpha\epsilon}}.
\end{equation}
\end{assumption}
The lower bound on $\kappa$ in \eqref{eq:theta linear} is easily satisfied; it is the upper bound that could be difficult.

The next result is a restatement of convergence rates \cite[Theorem 2.6]{HerLukStu23b} for the setting of multi-valued mappings
in Euclidean spaces. The statement of \cite[Theorem 2.6]{HerLukStu23b} concerns random selections from collections of
single-valued mappings, but the extension to multi-valued mappings presents no difficulty here since
we are in the deterministic case,
and logic of the proof, specialized to a single mapping, works here as well.
\begin{proposition}\label{t:metric.subreg.convergence} Let $T:\Lambda\rightrightarrows \Lambda$ for
$\Lambda\subseteq\Ebb$ satisfy Assumption \ref{ass:regularity}.
For any $x^{(k)}\in U\cap\Lambda$ close enough to $\Fix T\cap \Lambda$, the iterates $x^{(k+1)}\in Tx^{(k)}$ satisfy
\begin{equation}\label{e:iterate bound}
\dist(x^{(k+1)}, \Fix T\cap \Lambda)
\leq \gamma\dist\paren{x^{(k)}, \Fix T\cap \Lambda}
\end{equation}%
where $\gamma=\sqrt{(1+\epsilon)-\frac{1-\alpha}{\alpha\kappa^2}}$.
If in addition Assumption \ref{a:msr convergence} is satisfied with parameter values
$\overline\alpha$ and $\overline\epsilon$, then for all $x^{(0)}\in U$ close enough to
$\Fix T\cap \Lambda$, $x^{(k)}\to \overline x\in \Fix T\cap \Lambda$
R-linearly with  rate constant $\overline\gamma=\sqrt{(1+\overline\epsilon)-\frac{1-\overline\alpha}{\overline\alpha\kappa^2}}<1$.
\end{proposition}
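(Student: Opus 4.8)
The plan is to prove the two assertions separately: first the one-step contraction \eqref{e:iterate bound} for the distance to $\Fix T\cap\Lambda$, which needs only parts \eqref{t:msr convergence b} (stability) and \eqref{t:msr convergence a} (almost quasi-contractivity) of Assumption \ref{ass:regularity}, and then to promote this decay into convergence of the iterates themselves to a single limit once Assumption \ref{a:msr convergence} supplies $\overline\gamma\in(0,1)$. This is exactly the structure underlying \cite[Theorem 2.6]{HerLukStu23b}, and I would simply re-run its argument specialized to a single deterministic multi-valued map.

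For the one-step bound, fix $x^{(k)}\in U\cap\Lambda$, let $x^{(k+1)}\in Tx^{(k)}$, and choose $y\in\Fix T\cap\Lambda$ to be a nearest point, so that $\|x^{(k)}-y\|=\dist(x^{(k)},\Fix T\cap\Lambda)=:d_k$ (such a point exists by closedness of the fixed-point set together with part \eqref{t:msr convergence c}). The almost $\alpha$-firm nonexpansiveness inequality \eqref{eq:paafne} applied at this $y$ gives
\[
\|x^{(k+1)}-y\|^2\le(1+\epsilon)d_k^2-\tfrac{1-\alpha}{\alpha}\|x^{(k)}-x^{(k+1)}\|^2.
\]
The stability estimate \eqref{e:msr}, combined with $\dist(x^{(k)},Tx^{(k)})\le\|x^{(k)}-x^{(k+1)}\|$, yields the lower bound $\|x^{(k)}-x^{(k+1)}\|\ge\kappa^{-1}d_k$. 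Substituting this into the negative term and using $\dist(x^{(k+1)},\Fix T\cap\Lambda)\le\|x^{(k+1)}-y\|$ produces
\[
\dist(x^{(k+1)},\Fix T\cap\Lambda)^2\le\Big[(1+\epsilon)-\tfrac{1-\alpha}{\alpha\kappa^2}\Big]d_k^2=\gamma^2 d_k^2,
\]
which is precisely \eqref{e:iterate bound}; note this step requires no sign restriction on $\gamma$.

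For the second assertion I would work with the specialized constants $\overline\alpha,\overline\epsilon$, for which Assumption \ref{a:msr convergence} forces $\overline\gamma\in(0,1)$. Discarding the nonnegative term $\|x^{(k+1)}-y\|^2$ in the displayed inequality gives the step bound $\|x^{(k)}-x^{(k+1)}\|\le C\,d_k$ with $C=\sqrt{\overline\alpha(1+\overline\epsilon)/(1-\overline\alpha)}$. Iterating \eqref{e:iterate bound} gives $d_k\le\overline\gamma^{\,k}d_0$, so the step lengths satisfy $\|x^{(k)}-x^{(k+1)}\|\le C\overline\gamma^{\,k}d_0$ and are summable; hence $\{x^{(k)}\}$ is Cauchy and converges to some $\overline x$, which lies in $\Fix T\cap\Lambda$ because $d_k\to0$ and the set is closed. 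Summing the geometric tail then delivers $\|x^{(k)}-\overline x\|\le\frac{C}{1-\overline\gamma}\overline\gamma^{\,k}d_0$, i.e. R-linear convergence with rate $\overline\gamma$.

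The delicate point is not any single inequality but the requirement that every iterate stay in $U\cap\Lambda$, where Assumption \ref{ass:regularity} is in force: membership in $\Lambda$ is automatic since $T$ maps $\Lambda$ into $\Lambda$, but membership in $U$ must be propagated inductively. I would fix a radius $\rho$ with $\mathbb B_\rho(p_0)\subseteq U$, where $p_0$ is a nearest point of $\Fix T\cap\Lambda$ to $x^{(0)}$, and assume the one-step and step bounds hold through index $k$; the telescoping estimate
\[
\|x^{(k+1)}-p_0\|\le d_0+\sum_{j=0}^{k}C\overline\gamma^{\,j}d_0\le\Big(1+\tfrac{C}{1-\overline\gamma}\Big)d_0
\]
then shows that, provided $x^{(0)}$ is close enough that the right-hand side is below $\rho$, the next iterate again lies in $U$, closing the induction and making precise the phrase ``close enough.'' I expect this neighborhood-invariance bookkeeping to be the only real obstacle; everything else is the elementary chain of estimates above.
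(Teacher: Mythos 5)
Your proof is correct, and it is essentially the argument the paper gestures at: the paper offers no written proof of this proposition, instead declaring it a restatement of \cite[Theorem 2.6]{HerLukStu23b} whose ``logic of the proof, specialized to a single mapping, works here as well.'' You have supplied exactly that specialization in full: the one-step contraction follows from pairing the a$\alpha$-fne inequality \eqref{eq:paafne} at a nearest fixed point with the metric-subregularity lower bound $\|x^{(k)}-x^{(k+1)}\|\geq \kappa^{-1}\dist(x^{(k)},\Fix T\cap\Lambda)$, and the passage to R-linear convergence is the standard summable-steps/Cauchy argument together with the neighborhood-invariance induction that makes ``close enough'' precise. What your version buys is self-containedness and transparency for the deterministic, single-mapping, multi-valued setting at hand; what the paper's citation buys is inheritance of the more general stochastic (random selection from collections of mappings) framework without repeating the bookkeeping. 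Two small caveats you should make explicit: (i) attainment of a nearest point $y\in\Fix T\cap\Lambda$ and the conclusion $\overline x\in\Fix T\cap\Lambda$ both tacitly use closedness of $\Fix T\cap\Lambda$, which Assumption \ref{ass:regularity} does not state --- either assume it (it holds in the paper's applications) or replace the nearest point by a minimizing sequence in the one-step bound, noting that only the final membership claim genuinely needs closedness; (ii) your invariance argument places a ball $\mathbb{B}_\rho(p_0)\subseteq U$ around a nearest fixed point, which presumes such fixed points lie in the interior of $U$ --- this is the intended reading of the proposition's ``close enough'' hypothesis but deserves a sentence. Neither caveat affects the substance of the argument.
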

We apply this to the present problem.

\begin{theorem}\label{t:CDRl Orbital convergence}
For the cyclic relaxed Douglas-Rachford mapping $T^{\text{CDR}\lambda}_{\text{orbit3D}}$ defined by \eqref{eq:CDRl_orbit3D}.
Let Assumption \ref{a:1} \eqref{a:1,b}-\eqref{a:1,d'}, and Assumption \ref{ass:regularity}\eqref{t:msr convergence c}-\eqref{t:msr convergence b} hold.
Additionally, let Assumption \ref{a:msr convergence}
hold with parameter values $\overline\alpha=5/6$ and $\overline\epsilon$ given by \eqref{e:CDRl alpha-epsilon}, where these
are the constants characterizing the regularity of the a$\alpha$-fne mapping $T^{\text{CDR}\lambda}_{\text{orbit3D}}$.
Define the sequence $(x^{(k)})$ by $x^{(k+1)}\in T^{\text{CDR}\lambda}_{\text{orbit3D}} x^{(k)}$.
Then for all $x^{(0)}$ close enough to
$\Fix T^{\text{CDR}\lambda}_{\text{orbit3D}}\cap \Lambda$, $x^{(k)}\to \overline x\in \Fix T^{\text{CDR}\lambda}_{\text{orbit3D}}\cap \Lambda$
R-linearly with  rate constant $\overline\gamma=\sqrt{(1+\overline\epsilon)-\frac{1-\overline\alpha}{\overline\alpha\kappa^2}}<1$.
\end{theorem}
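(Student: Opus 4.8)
The plan is to verify that $T^{\text{CDR}\lambda}_{\text{orbit3D}}$ satisfies all the hypotheses of Proposition \ref{t:metric.subreg.convergence} and then invoke that proposition directly; the theorem is an assembly of results already in hand rather than a fresh argument. Proposition \ref{t:metric.subreg.convergence} requires the three parts of Assumption \ref{ass:regularity}---existence of a fixed point, stability, and almost quasi-contractivity---together with Assumption \ref{a:msr convergence}.

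First I would dispatch the parts that are hypothesized outright. Conditions \eqref{t:msr convergence c} (existence) and \eqref{t:msr convergence b} (stability) are assumed directly in the statement of the theorem, so nothing is to be shown for them; in particular, the nonemptiness of $\Fix T^{\text{CDR}\lambda}_{\text{orbit3D}}\cap\Lambda$ supplied by \eqref{t:msr convergence c} is exactly what licenses the use of the fixed-point regularity characterization below. Likewise, Assumption \ref{a:msr convergence} is hypothesized with matching parameters.

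The only substantive step is the almost quasi-contractivity condition \eqref{t:msr convergence a}, and this is precisely the content of Lemma \ref{t:T_orbit3D aalpha-fne}. Under Assumption \ref{a:1}\eqref{a:1,b}--\eqref{a:1,d'}, that lemma establishes that $T^{\text{CDR}\lambda}_{\text{orbit3D}}$ is pointwise a$\alpha$-fne at every $u\in\Fix T^{\text{CDR}\lambda}_{\text{orbit3D}}$ with constant $\overline\alpha=5/6$ and violation $\overline\epsilon\in[0,1)$ as in \eqref{e:CDRl alpha-epsilon}, on $U_1=\Lambda_1=\mathbb C^N$. This is exactly inequality \eqref{eq:paafne}, so \eqref{t:msr convergence a} holds with $\alpha=\overline\alpha$ and $\epsilon=\overline\epsilon$. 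With all three parts of Assumption \ref{ass:regularity} in force and Assumption \ref{a:msr convergence} satisfied for these same parameter values, Proposition \ref{t:metric.subreg.convergence} applies verbatim and yields R-linear convergence of $(x^{(k)})$ to some $\overline x\in\Fix T^{\text{CDR}\lambda}_{\text{orbit3D}}\cap\Lambda$ with rate $\overline\gamma=\sqrt{(1+\overline\epsilon)-\tfrac{1-\overline\alpha}{\overline\alpha\kappa^2}}<1$.

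There is no genuinely hard step; the work is bookkeeping. The points needing care are that the constants $\overline\alpha,\overline\epsilon$ produced by Lemma \ref{t:T_orbit3D aalpha-fne} are the very ones fed into Assumption \ref{a:msr convergence}, and that the admissible range of the super-regularity constant $\epsilon_{U_5}$ imposed by Assumption \ref{a:1}\eqref{a:1,d'} keeps $\overline\epsilon<1$, so that the upper bound on $\kappa$ in \eqref{eq:theta linear} can be met and $\overline\gamma$ is both well-defined and strictly less than one. The range recorded in \eqref{e:CDRl alpha-epsilon} is exactly what guarantees this.
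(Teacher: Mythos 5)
Your proposal is correct and follows essentially the same route as the paper's proof: invoke Lemma \ref{t:T_orbit3D aalpha-fne} (with the nonemptiness of the fixed point set supplied by Assumption \ref{ass:regularity}\eqref{t:msr convergence c}) to establish the almost quasi-contractivity condition \eqref{t:msr convergence a}, then conclude via Assumption \ref{a:msr convergence} and Proposition \ref{t:metric.subreg.convergence}. The bookkeeping points you flag---matching the constants $\overline\alpha,\overline\epsilon$ and the role of Assumption \ref{a:1}\eqref{a:1,d'} in keeping $\overline\epsilon<1$---are consistent with, if slightly more explicit than, the paper's argument.
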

\begin{proof}
 By Lemma \ref{t:T_orbit3D aalpha-fne}, under Assumption \ref{a:1} and Assumption \ref{ass:regularity}\eqref{t:msr convergence c}
 the mapping
 $T^{\text{CDR}\lambda}_{\text{orbit3D}}$ is pointwise a$\alpha$-fne at all $y\in \Fix T^{\text{CDR}\lambda}_{\text{orbit3D}}\neq\emptyset$
 with $\overline\alpha=5/6$ and $\overline\epsilon$ given by \eqref{e:CDRl alpha-epsilon},
 hence Assumption \ref{ass:regularity}\eqref{t:msr convergence a} is satisfied which yields \eqref{e:iterate bound}.
Convergence with the claimed rate
 follows from  Assumption \ref{a:msr convergence}  and Proposition \ref{t:metric.subreg.convergence}.
\end{proof}

\begin{theorem}\label{t:CP Orbital convergence}
For the cyclic projection mapping $T^{\text{CP}}_{\text{orbit3D}}$ defined by \eqref{eq:first.CP}.
Let Assumption \ref{a:1} \eqref{a:1,b}, \eqref{a:1,e}, \eqref{a:1,f} and Assumption \ref{ass:regularity}\eqref{t:msr convergence c}-\eqref{t:msr convergence b} hold.
Additionally, let Assumption \ref{a:msr convergence}
hold with parameter values $\overline\alpha=5/6$ and $\overline\epsilon$ given by \eqref{e:CP alpha-epsilon}, where these
are the constants characterizing the regularity of the a$\alpha$-fne mapping $T^{\text{CP}}_{\text{orbit3D}}$.
Define the sequence $(x^{(k)})$ by $x^{(k+1)}\in T^{\text{CP}}_{\text{orbit3D}} x^{(k)}$.
Then for all $x^{(0)}$ close enough to
$\Fix T^{\text{CP}}_{\text{orbit3D}}\cap \Lambda$, $x^{(k)}\to \overline x\in \Fix T^{\text{CP}}_{\text{orbit3D}}\cap \Lambda$
R-linearly with rate constant $\overline\gamma=\sqrt{(1+\overline\epsilon)-\frac{1-\overline\alpha}{\overline\alpha\kappa^2}}<1$.
\end{theorem}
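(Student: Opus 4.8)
The plan is to mirror the proof of Theorem \ref{t:CDRl Orbital convergence} almost verbatim, substituting the cyclic-projection analog of the almost $\alpha$-firm nonexpansiveness lemma. The strategy is to verify the three hypotheses of Assumption \ref{ass:regularity} for $T^{\text{CP}}_{\text{orbit3D}}$ and then invoke the abstract convergence result in Proposition \ref{t:metric.subreg.convergence}. Two of those three hypotheses, namely existence (Assumption \ref{ass:regularity}\eqref{t:msr convergence c}) and stability (Assumption \ref{ass:regularity}\eqref{t:msr convergence b}), are imposed directly in the statement, so the only thing that needs to be established is almost quasi-contractivity (Assumption \ref{ass:regularity}\eqref{t:msr convergence a}).

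The key step is to apply Lemma \ref{t:T_orbit3D CP aalpha-fne}, which requires precisely Assumption \ref{a:1}\eqref{a:1,b}, \eqref{a:1,e}, \eqref{a:1,f} together with the nonemptiness $\Fix T^{\text{CP}}_{\text{orbit3D}}\neq\emptyset$ supplied by Assumption \ref{ass:regularity}\eqref{t:msr convergence c}. That lemma yields that $T^{\text{CP}}_{\text{orbit3D}}$ is pointwise a$\alpha$-fne at every fixed point with constant $\overline\alpha=5/6$ and violation $\overline\epsilon=\check\epsilon_5$ as in \eqref{e:CP alpha-epsilon}. This is exactly the inequality \eqref{eq:paafne} defining Assumption \ref{ass:regularity}\eqref{t:msr convergence a}, so all three parts of Assumption \ref{ass:regularity} now hold with the stated parameter values.

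With Assumption \ref{ass:regularity} verified, the first conclusion of Proposition \ref{t:metric.subreg.convergence} gives the one-step contraction \eqref{e:iterate bound} for iterates close enough to the fixed-point set. Adding Assumption \ref{a:msr convergence} with the specific values $\overline\alpha=5/6$ and $\overline\epsilon=\check\epsilon_5$ then places the rate constant $\overline\gamma=\sqrt{(1+\overline\epsilon)-\tfrac{1-\overline\alpha}{\overline\alpha\kappa^2}}$ strictly below $1$, and the second conclusion of Proposition \ref{t:metric.subreg.convergence} delivers R-linear convergence of $x^{(k)}$ to some $\overline x\in\Fix T^{\text{CP}}_{\text{orbit3D}}\cap\Lambda$. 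Since the statement only asks for this R-linear convergence, the proof is complete once these citations are assembled.

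The one point requiring care, and the place where the CP argument departs from the CDR$\lambda$ argument, is the identification of the ambient set $\Lambda$ and neighborhood $U$. In the CDR$\lambda$ case the composite mapping turned out to be a$\alpha$-fne on all of $\mathbb C^N$ because $\Lambda_1=U_1=\mathbb C^N$; by contrast, Lemma \ref{t:T_orbit3D CP aalpha-fne} only establishes the property at points of $\Lambda_5$ on $U_5$. One must therefore take $\Lambda=\Lambda_5$ and $U=U_5$ and check that the stability hypothesis \eqref{e:msr} and the iteration $x^{(k+1)}\in T^{\text{CP}}_{\text{orbit3D}}x^{(k)}$ are read on this matching pair, so that Proposition \ref{t:metric.subreg.convergence} applies with $T:\Lambda\rightrightarrows\Lambda$. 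This bookkeeping about the correct invariant set is the main (and essentially the only) obstacle; everything else is a direct quotation of the preceding lemma and proposition.
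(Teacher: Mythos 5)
Your proposal is correct and follows essentially the same route as the paper: the paper's proof simply says it mirrors Theorem \ref{t:CDRl Orbital convergence}, replacing Lemma \ref{t:T_orbit3D aalpha-fne} with Lemma \ref{t:T_orbit3D CP aalpha-fne} to verify Assumption \ref{ass:regularity}\eqref{t:msr convergence a}, and then invoking Proposition \ref{t:metric.subreg.convergence} together with Assumption \ref{a:msr convergence}. Your additional remark that one must take $\Lambda=\Lambda_5$ and $U=U_5$ (since the CP composite is only a$\alpha$-fne at points of $\Lambda_5$ on $U_5$, unlike the CDR$\lambda$ case where the relevant sets are all of $\mathbb{C}^N$) is a careful and accurate piece of bookkeeping consistent with \eqref{eq:violation.composite.CP}.
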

\begin{proof}
    The proof is  similar to the one of Theorem \ref{t:CDRl Orbital convergence}, using Lemma \ref{t:T_orbit3D CP aalpha-fne}.
\end{proof}

In the following, we provide a convergence guarantee for the relaxed Douglas-Rachford algorithm on the product space for the problem at hand.
A general convergence proof for this algorithm is provided in \cite{luke2020convergence}. However, it is quite complicated to follow all the details. Therefore, we present a simpler approach, similar to the one used for cyclic projection and cyclic relaxed Douglas-Rachford above.
\begin{theorem}\label{t:DRl Orbital convergence}
For the  relaxed Douglas-Rachford mapping $T^{\textbf{DR}\lambda}_{\text{orbit3D}}$ defined by \eqref{eq:DRl product}.
Let Assumption \ref{a:1} \eqref{a:1,b}, \eqref{a:1,d'}, and Assumption \ref{ass:regularity}\eqref{t:msr convergence c}-\eqref{t:msr convergence b} hold for $\overline x$ replaced by $\overline{\mathbf x}$ and $\mathbb E$ replaced by $\mathbb E^m$.
Additionally, let Assumption \ref{a:msr convergence}
hold with parameter values $\overline\alpha=1/2$ and $\overline\epsilon$ given by 
\begin{equation}
    \overline \epsilon:=\frac{1}{2}\left[\left(\lambda\sqrt{1+\tilde \epsilon_5}+1-\lambda\right)^2-1\right],
\end{equation}
where
$\tilde{\epsilon}_5= 8\epsilon_{U_5}(1 + \epsilon_{U_5})/(1 -\epsilon_{U_5})^2$, and $\overline \alpha, \overline \epsilon$
are the constants characterizing the regularity of the a$\alpha$-fne mapping $T^{\textbf{DR}\lambda}_{\text{orbit3D}}$.
Define the sequence $(\mathbf x^{(k)})$ by $\mathbf x^{(k+1)}\in T^{\textbf{DR}\lambda}_{\text{orbit3D}} \mathbf x^{(k)}$.
Then for all $\mathbf x^{(0)}$ close enough to
$\Fix T^{\textbf{DR}\lambda}_{\text{orbit3D}}\cap \Lambda$, $\mathbf x^{(k)}\to \overline {\mathbf x}\in \Fix T^{\textbf{DR}\lambda}_{\text{orbit3D}}\cap \Lambda$
R-linearly with  rate constant $\overline\gamma=\sqrt{(1+\overline\epsilon)-\frac{1-\overline\alpha}{\overline\alpha\kappa^2}}<1$.
\end{theorem}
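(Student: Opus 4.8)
The plan is to recognize $T^{\textbf{DR}\lambda}_{\text{orbit3D}}$ as nothing more than a \emph{two-set} relaxed Douglas--Rachford operator living on the product space $\mathbb E^m$ (here $m=5$), acting between the product set $C=\SYM\times\SR\times\SUPP\times\LF\times\M$ and the diagonal $D$. Once this is in place, the proof follows the same two-stage template as Theorems \ref{t:CDRl Orbital convergence} and \ref{t:CP Orbital convergence}: first show the operator is pointwise a$\alpha$-fne at its fixed points via Lemma \ref{lem:Tfirmly.nonep}, then invoke Proposition \ref{t:metric.subreg.convergence} for R-linear convergence. The novelty relative to the cyclic proofs is that the almost nonexpansiveness of the reflector $R_C$ must be assembled from the component reflectors through Lemma \ref{lem:pane.reflector.product}.

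First I would establish the regularity of the two reflectors entering \eqref{eq:DRl product}. Under Assumption \ref{a:1}\eqref{a:1,b},\eqref{a:1,d'}, Lemma \ref{lem:pane.refectors}\eqref{lem:pane.refectors i} gives that each component reflector $R_{A_j}$ is pointwise almost nonexpansive with violations $\tilde\epsilon_1=\tilde\epsilon_2=\tilde\epsilon_3=\tilde\epsilon_4=0$ and $\tilde\epsilon_5=8\epsilon_{U_5}(1+\epsilon_{U_5})/(1-\epsilon_{U_5})^2$. Applying Lemma \ref{lem:pane.reflector.product} with $\Lambda=\Lambda_1\times\cdots\times\Lambda_5$ and $U=U_1\times\cdots\times U_5$ then yields that the product reflector $R_C$ is pointwise almost nonexpansive with violation $\max_j\tilde\epsilon_j=\tilde\epsilon_5$. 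The diagonal $D$ is a linear subspace, hence convex, so $R_D$ is (globally) nonexpansive with violation $0$; crucially, one may take its associated $\Lambda$ and $U$ to be all of $\mathbb E^m$, which makes the inclusion hypotheses $R_{B_2}\Lambda_2\subseteq\Lambda_1$, $R_{B_2}U_2\subseteq U_1$ of Lemma \ref{lem:Tfirmly.nonep} automatic once $D$ is taken as the globally valid factor.

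Second I would feed these two reflectors into Lemma \ref{lem:Tfirmly.nonep} with the single relaxation parameter $\lambda$, one reflector carrying violation $\tilde\epsilon_5$ and the other carrying violation $0$, to conclude that $T^{\textbf{DR}\lambda}_{\text{orbit3D}}$ is pointwise a$\alpha$-fne at every $\overline{\mathbf x}\in\Fix T^{\textbf{DR}\lambda}_{\text{orbit3D}}$ with constant $\overline\alpha=1/2$ and violation $\overline\epsilon=\tfrac12\big[(\lambda\sqrt{1+\tilde\epsilon_5}+1-\lambda)^2-1\big]$. This is precisely Assumption \ref{ass:regularity}\eqref{t:msr convergence a} in the product space. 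With existence and stability (Assumption \ref{ass:regularity}\eqref{t:msr convergence c}--\eqref{t:msr convergence b}) assumed directly on $\mathbb E^m$ and Assumption \ref{a:msr convergence} guaranteeing $0<\overline\gamma<1$, Proposition \ref{t:metric.subreg.convergence} then delivers R-linear convergence of $\mathbf x^{(k)}$ to some $\overline{\mathbf x}\in\Fix T^{\textbf{DR}\lambda}_{\text{orbit3D}}\cap\Lambda$ at the claimed rate.

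The delicate point, and the step I expect to require the most care, is the bookkeeping of which set plays the role of the \emph{outer} reflector $R_{B_1}$ and which plays the \emph{inner} reflector $R_{B_2}$ in Lemma \ref{lem:Tfirmly.nonep}, because the resulting violation $\tfrac12[(\lambda\sqrt{1+\tilde\epsilon_1}+1-\lambda)^2(1+\tilde\epsilon_2)-1]$ is asymmetric in $(\tilde\epsilon_1,\tilde\epsilon_2)$. The stated $\overline\epsilon$ corresponds to assigning the violation-$\tilde\epsilon_5$ reflector ($R_C$) as the outer factor $R_{B_1}$ and the nonexpansive $R_D$ as the inner factor $R_{B_2}$; I would therefore verify that \eqref{eq:DRl product} is read consistently with this assignment, and, since $D$ is the globally nonexpansive set, confirm that taking $\Lambda=U=\mathbb E^m$ for $D$ discharges the inclusion conditions while the product neighborhood of $C$ supplies the localization. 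The remaining verification that $\overline\epsilon\in[0,1)$ on the range of $\epsilon_{U_5}$ permitted by Assumption \ref{a:1}\eqref{a:1,d'} is routine.
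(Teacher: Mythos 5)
Your proposal mirrors the paper's own proof step for step: Lemma \ref{lem:pane.refectors}\eqref{lem:pane.refectors i} for the component reflectors, Lemma \ref{lem:pane.reflector.product} to assemble $R_C$ with violation $\tilde\epsilon_C=\tilde\epsilon_5$, convexity of the diagonal for $R_D$, Lemma \ref{lem:Tfirmly.nonep} for the a$\alpha$-fne property with $\overline\alpha=1/2$, and Proposition \ref{t:metric.subreg.convergence} together with Assumption \ref{a:msr convergence} for the R-linear rate. In that sense the proposal is faithful to the paper's argument.

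The one substantive issue is exactly the ``delicate point'' you flag yourself, and your proposed resolution does not close it; it reproduces a tension that is also present in the paper's proof. The operator \eqref{eq:DRl product} has $R_D$ outside, $R_C$ inside, and $P_C$ in the relaxation term, so the only assignment under which Lemma \ref{lem:Tfirmly.nonep} literally applies is $B_1=D$, $B_2=C$. With that assignment your inclusion argument is right --- taking $\Lambda_D=U_D=\mathbb E^m$ makes $R_C\Lambda_C\subseteq\Lambda_D$ and $R_CU_C\subseteq U_D$ automatic --- but the lemma then yields violation $\tfrac12\left[\left(\lambda\sqrt{1+\tilde\epsilon_D}+1-\lambda\right)^2(1+\tilde\epsilon_C)-1\right]=\tfrac12\tilde\epsilon_5$ at points of $\Lambda_C$ on $U_C$, not the stated $\overline\epsilon=\tfrac12\left[\left(\lambda\sqrt{1+\tilde\epsilon_5}+1-\lambda\right)^2-1\right]$. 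Conversely, the assignment that produces the stated $\overline\epsilon$ (namely $B_1=C$, $B_2=D$, as you correctly identify) requires the inclusions $R_D\Lambda_D\subseteq\Lambda_C$ and $R_DU_D\subseteq U_C$, and these \emph{cannot} be discharged by taking $\Lambda_D=U_D=\mathbb E^m$: since $D$ is a subspace, $R_D$ is a linear involution and hence surjective, so those inclusions would force $\Lambda_C=U_C=\mathbb E^m$, contradicting the localization carried by $U_2$ and $U_5$. Thus your plan to ``verify that \eqref{eq:DRl product} is read consistently with this assignment'' would fail --- the definition is not consistent with it --- and the violation formula and the inclusion-discharging argument in your write-up belong to two incompatible assignments. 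Since $\left(\lambda\sqrt{1+\tilde\epsilon_5}+1-\lambda\right)^2\le 1+\tilde\epsilon_5$, the conclusion that is actually justified by the consistent assignment is the weaker violation $\tfrac12\tilde\epsilon_5$; R-linear convergence survives with a correspondingly larger rate constant, but obtaining the sharper constant claimed in the theorem would require either exchanging the roles of $C$ and $D$ in the definition of the operator, or a refined version of Lemma \ref{lem:Tfirmly.nonep} that exploits the linearity of $R_D$.
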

\begin{proof}
By Lemma \ref{lem:pane.refectors}\eqref{lem:pane.refectors i} (which requires Assumption \ref{a:1}\eqref{a:1,b} and \eqref{a:1,d'}), for $j=1,\dots,5$, $R_{A_{j}}$
is pointwise almost nonexpansive with
violation $\tilde \epsilon_{j}$  at each point in $\Lambda_{j}$ on $U_{j}$. 
From Lemma \ref{lem:pane.reflector.product}, it follows that
$R_{C}$ with $C=A_1\times \dots \times A_m$ is pointwise almost nonexpansive
with violation $\tilde \epsilon_C=\max\limits_{j=1,\dots,m}\tilde \epsilon_j$  at all points in $\Lambda_C={\Lambda_1\times \dots \times \Lambda_m}$ on
$U_C={U_1\times \dots \times U_m}$.
Note that $\tilde \epsilon_C=\tilde \epsilon_5$ since for $j=1,2,3,4$, $\tilde \epsilon_j=0$.

Since the diagonal set $D$ is convex,  it is everywhere $\epsilon$-superregular at a distance with
$\epsilon=0$. 
In other words, $D$ is $\epsilon$-super-regular at a distance at any  $x$ relative to $\Lambda_D=\mathbb C^N$
on any neighborhood $U_D=\mathbb C^N$
 with constant $\epsilon_{U_D}=0$.
Thus $R_D$ is pointwise almost nonexpansive
with violation $\tilde \epsilon_D=0$ at all points in $\Lambda_D$ on
$U_D$.

By Lemma \ref{lem:Tfirmly.nonep}, $T^{\textbf{DR}\lambda}_{\text{orbit3D}}$, defined by \eqref{eq:DRl product},
is pointwise a$\alpha$-fne at all $y\in \Lambda_{D}$ with constant $\overline\alpha=1/2$ and
violation $\overline \epsilon$ given by 
\begin{equation}
    \overline \epsilon:=\frac{1}{2}\left[\left(\lambda\sqrt{1+\tilde\epsilon_{C}}+1-\lambda\right)^2(1+\tilde\epsilon_{D})-1\right],
\end{equation}
on $U_{D}$, hence Assumption \ref{ass:regularity}\eqref{t:msr convergence a} is satisfied which yields \eqref{e:iterate bound}.
Convergence with the claimed rate
 follows from  Assumption \ref{a:msr convergence}  and Proposition \ref{t:metric.subreg.convergence}.
\end{proof}

%
%
\subsubsection{Characterization of Fixed points}\label{sec:characterizations}
For reference, we provide the characterizations of the fixed point sets of the three algorithms.  These
have been established in earlier work.  We simply state the results here.

Based on our work in \cite{DinhJansenLuke2024CDRl}, the characterization of the fixed points of the cyclic relaxed Douglas-Rachford operator for the reconstruction of the 3D orbital problem is given by \cite[Corollary 3.4]{DinhJansenLuke2024CDRl}
\begin{eqnarray}
\emptyset=\SYM\cap \SR\cap \SUPP\cap\LF\cap \M&
        \overset{{\lambda\in[0,1]}}{\subseteq}&
        \Fix T^{\text{CDR}\lambda}_{\text{orbit3D}}\nonumber\\
        &\overset{{\lambda\in[0,\tfrac12]}}{\subseteq}&
        \conv \left(\SYM\cup \SR\cup \SUPP\cup\LF\cup \M\right),
    \end{eqnarray}
and \cite[Eq(3.40)]{DinhJansenLuke2024CDRl}
\begin{equation}\label{eq:shadow.sunset.CP.noise.thm}
    P_{\SYM}(\Fix T^{\text{CDR}\lambda}_{\text{orbit3D}}\cap U)\subseteq \Fix_{\!\!\epsilon}\, T^{\text{CP}}_{\text{orbit3D}}\equiv\{y\in\mathbb C^N\,:\, y\in T^{\text{CP}}_{\text{orbit3D}}y +\varepsilon \mathbb B\},
\end{equation}
where $U$ is a subset of $\mathbb C^N$ intersecting $\Fix T^{\text{CDR}\lambda}_{\text{orbit3D}}$ to which the iterates are automatically confined, and $\mathbb B$ is the closed unit ball in $\mathbb C^N$.

The following characterizations of the fixed points of the relaxed Douglas-Rachford operator on the product space for the 3D orbital reconstruction problem are obtained from
\cite{luke2020convergence}:
\begin{equation}\label{eq:Fix DRl product space}
    \begin{array}{rl}
     \Fix T^{\textbf{DR}\lambda}_{\text{orbit3D}}\cap U& \\
     \subseteq &\left\{ f-\frac{\lambda}{1-\lambda} (f-e)\,:\, f\in P_{\Omega}(f-\frac{\lambda}{1-\lambda} (f-e))\,,\,e\in P_D(f) \right\}\cap U,
    \end{array}
\end{equation}
    where $\Omega = \SYM\times \SR\times \SUPP\times\LF\times \M$, $D$ is the diagonal set in $(\mathbb C^N)^5$, $U$ is an open set in $(\mathbb C^N)^5$, and
\begin{equation}\label{eq:monotoncity}
    P_{\Omega}(\Fix T^{\textbf{DR}\lambda_2}_{\text{orbit3D}})\subseteq P_{\Omega}(\Fix T^{\textbf{DR}\lambda_1}_{\text{orbit3D}})
    \quad \mbox{whenever }\lambda_1\le \lambda_2.
\end{equation}
The main message of these characterizations is that the fixed point sets for $P_{\Omega}(\Fix T^{\textbf{DR}\lambda}_{\text{orbit3D}})$ are
monotonically decreasing as $\lambda$ increases.  The fixed points of $T^{\text{CDR}\lambda}_{\text{orbit3D}}$ are in the convex hull of the
union of the constraint sets, and remain ``near'' the fixed points of the cyclic projections mapping.  The significance of this is demonstrated
in the numerical demonstrations below.

\section{Numerical experiments}
\label{sec:experiments}
This section presents numerical results for the algorithms applied to a simulated and laboratory  photoemission orbital tomography experiments.  The
data and scripts for running the simulated experiment are available at \cite{Dinh_2024}.  The experimental data is available from \cite{Bennecke25_data}.

\subsection{Simulated Data}
We used the simulated dataset with 13 spheres to evaluate the reliability of the output with this amount of input.
The dataset contains 9.9\% non-zero voxels. The remaining 90.1\% of the voxels need to be interpolated, see Figure\ref{fig:data.best.reconstruction}(a).

We collect the behavior of the algorithms from 100 random starting points. The sparsity parameter is $s=2400$.
The support areas in the physical domain and the Fourier domain are set the same as in \cite{Dinh_2024}. 

In Figure \ref{fig:data.best.reconstruction}(b)-(e), we compare the best approximated fixed point to the exact solution in both the Fourier domain and the physical domain using the cyclic relaxed Douglas-Rachford with $\lambda=0.7$.
As expected, they appear nearly identical.
\begin{figure}
\subfigure[]{
\includegraphics[scale=0.7]{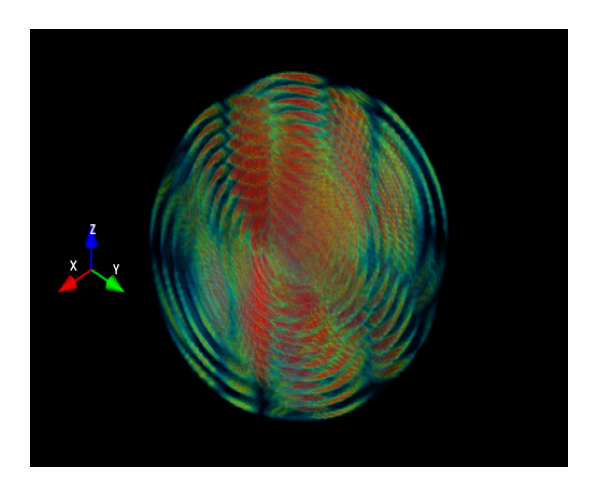}}
\subfigure[]{\includegraphics[scale=0.7]{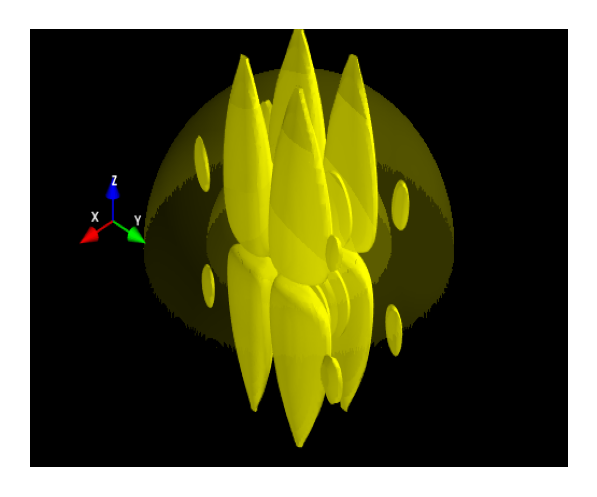}}
\subfigure[]{\includegraphics[scale=0.7]{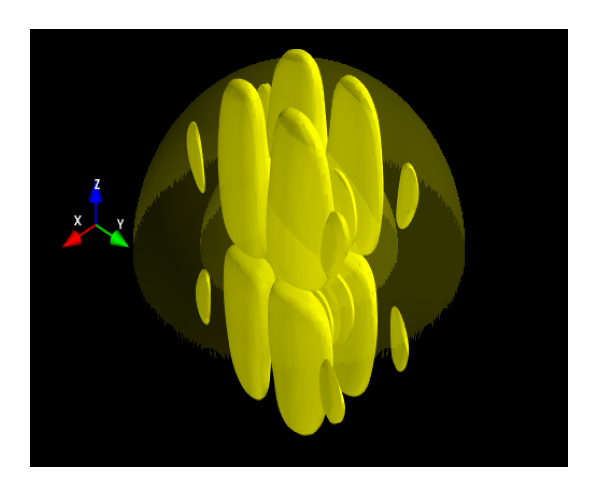}}
\subfigure[]{\includegraphics[scale=1]{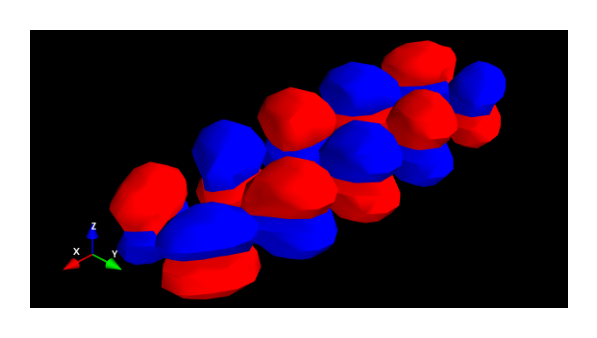}}
\subfigure[]{\includegraphics[scale=1]{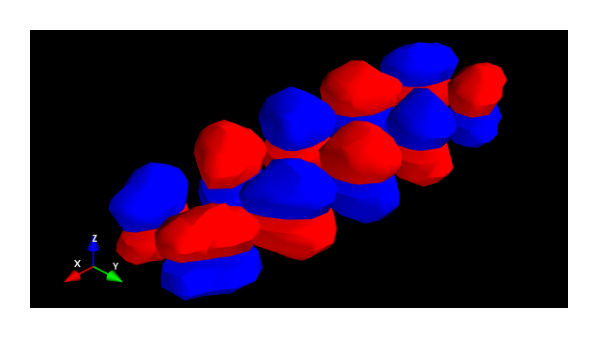}}
\caption{a) Data obtained from ARPES; b) Fourier amplitude of the exact solution; c) Fourier amplitude of reconstruction at smallest gap of the CDR$\lambda$ with $\lambda=0.7$;
d) The exact solution; e)
Reconstruction at smallest gap of the CDR$\lambda$ with $\lambda=0.7$.}
\label{fig:data.best.reconstruction}
\end{figure}
In Figure \ref{fig:changes.10times}(a), we present five examples of convergence plots for the cyclic projection and the cyclic relaxed Douglas-Rachford methods
($\lambda=0.3$ and $\lambda=0.7$) starting from the same initializations.
Once a local region of convergence has been found, the algorithm converges linearly to a tolerance $\text{tol}=10^{-8}$.  The theory behind these methods, whether cyclic projections or Douglas-Rachford, does not state how long one has to wait before a local region of convergence is found (i.e. there is no global theory).
The slopes of the convergence plots for cyclic projections are always steeper than those of the cyclic Douglas-Rachford, indicating that the former has a better rate of linear convergence.
Over 100 trials, the cyclic projection method reaches the tolerance in an average of 169 iterations, while the cyclic relaxed Douglas-Rachford method requires more time, averaging 336 iterations.
If we reduce $\lambda$ to $0.3$, the average number of iterations is reduced to $280$.
This can be explained based on the operator: as $\lambda \to 0$, the relaxation term is smaller, and the cyclic relaxed Douglas-Rachford operator approaches the cyclic projection operator.
In Figure \ref{fig:changes.10times}(b), we illustrate the relationship between the \emph{gap} \eqref{e:gap} and the \emph{error} \eqref{eq:error.with.truth} at final iterates for the two algorithms, where the error represents the difference between the approximate fixed point and the exact solution (see \cite{Dinh_2024}).
We observe that: 1) a smaller gap indicates better reconstruction, and 2) the approximate fixed points of the cyclic relaxed Douglas-Rachford have gaps and errors that are roughly the same as those of the fixed points of the cyclic projection.
These observations further support the statement that the gap is reliable and that the shadow of the fixed points of the cyclic relaxed Douglas-Rachford on the set $\SYM$ is close to the fixed points of the cyclic projection, as stated in Section \ref{sec:characterizations}.

\begin{figure}
    \centering
    \subfigure[]{
\includegraphics[scale=0.56]
{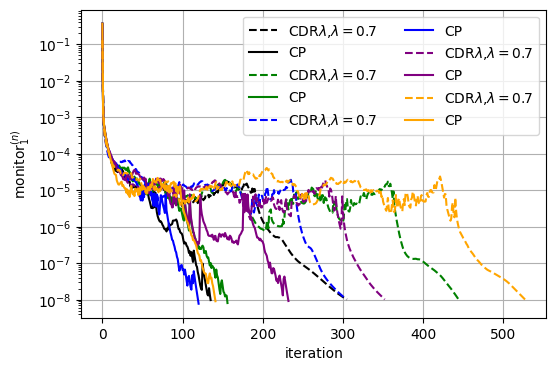}}
\subfigure[]{
\includegraphics[scale=0.56]{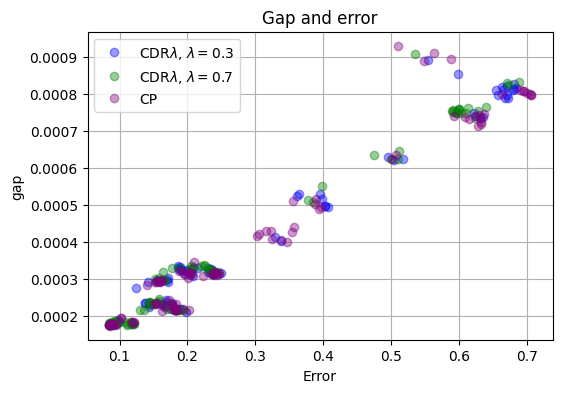}}
    \caption{(a) Five ``typical'' convergence plots of the cyclic relaxed Douglash-Rachford and cyclic projection algorithms. (b) The relationship between gap and error at (approximate) fixed points of the two algorithms.}
    \label{fig:changes.10times}
\end{figure}
We now compare the efficiency of the two methods.
\begin{figure}
\centering
\subfigure[]{
\includegraphics[scale=0.57]{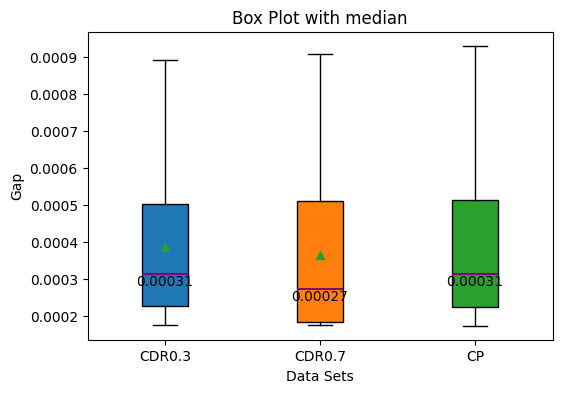}}
\subfigure[]{
\includegraphics[scale=0.57]{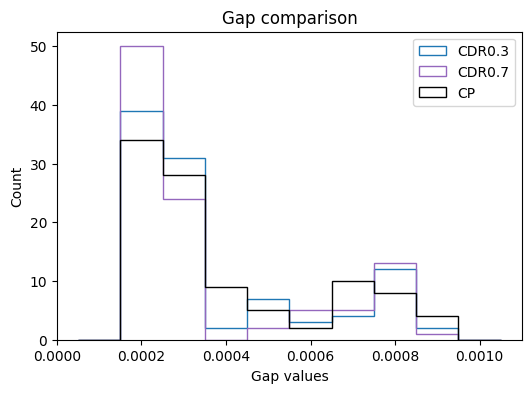}}
\caption{CP and CDR$\lambda$, $\lambda=0.7, 0.3$ with 100 trials for each method. (a) Boxplots showing gap variation. (b) Histogram of the gap.}
\label{fig:histogram}
\end{figure}
Figure \ref{fig:histogram}(a) shows the median, mean, variance, maximum and minimum of the gaps of the final iterates of the cyclic projections and cyclic relaxed Douglas-Rachford algorithms ($\lambda=0.3$ and $\lambda=0.7$).  The minimum achieved gap for all three numerical experiments is the same, but the mean, variance and maximum gaps are slightly different, indicating that CDR$\lambda$ with $\lambda=0.7$ achieves (slightly) better results on average.  This is
also shown in the histogram Figure \ref{fig:histogram}(b) of gap values obtained by the three. We divide the approximate fixed points into 8 clusters and see that
the cyclic relaxed Douglas-Rachford achieves a higher frequency of small gap values compared to the cyclic projection for each value of $\lambda$.
In the cluster with the smallest gap values, while the cyclic projection algorithm finds this cluster only 34\% of the time, the cyclic relaxed Douglas-Rachford finds this cluster for both values of $\lambda$, reaching up to 50\% at $\lambda=0.7$.
These results demonstrate that the cyclic relaxed Douglas-Rachford algorithm has a greater ability to achieve a higher success rate than the cyclic projection, which is considered state-of-the-art in \cite{Dinh_2024}.
This capability depends on the relaxation parameter $\lambda$, which can be fine-tuned.

We have already seen that adjusting the value of the parameter $\lambda \in [0,1]$ increases the likelihood of obtaining good reconstructions for cyclic relaxed Douglas-Rachford.
However, the ability to avoid poor local minima remains unremarkable. We compare both CP and CDR$\lambda$ to the relaxed Douglas-Rachford on the product space \eqref{eq:DRl product}. 
\begin{figure}
\centering
\subfigure[]{
\includegraphics[scale=0.53]{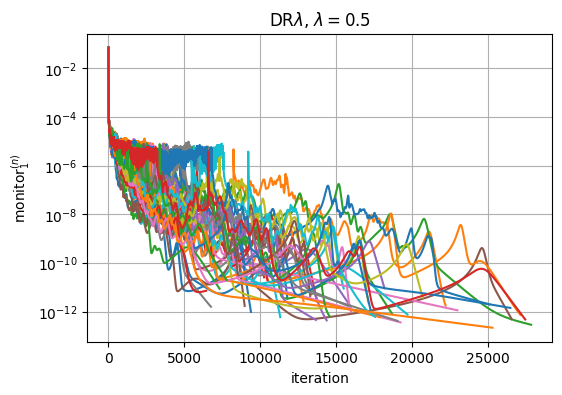}}
\subfigure[]{
\includegraphics[scale=0.53]{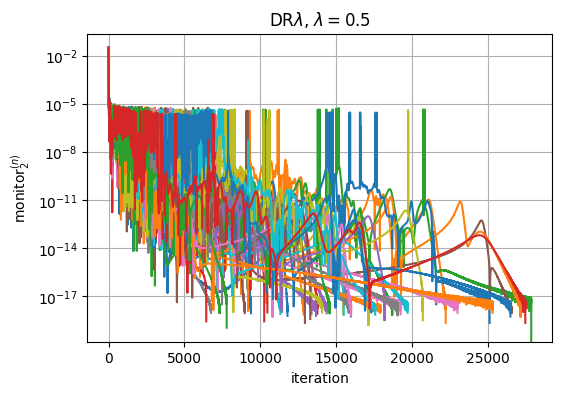}}
\subfigure[]{
\includegraphics[scale=0.53]{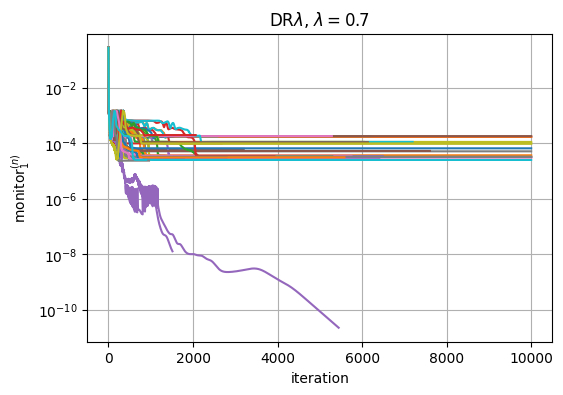}}
\subfigure[]{
\includegraphics[scale=0.53]{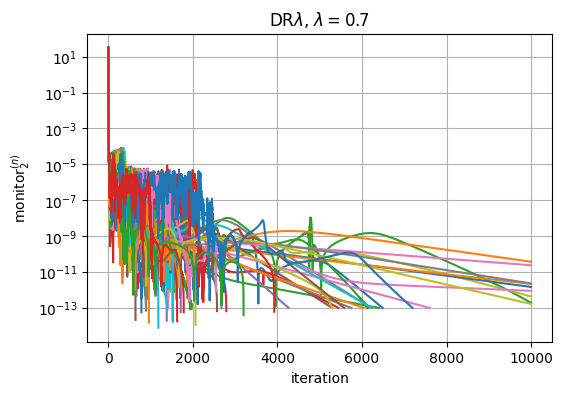}}
\caption{DR$\lambda$. Convergence plots of changes (monitor$_1$, left column) and gaps (monitor$_2$, right column) for the algorithm with two values of $\lambda$. Each $\lambda$ was run with 50 different starting points.
}
\label{fig: comparison.CP.DRl}
\end{figure}
We begin by discussing the convergence since the convergence behavior of DR$\lambda$ on the product space is qualitatively different than CP or CDR$\lambda$.

Starting from a fixed, randomly selected initial point and examining the behavior of DR$\lambda$ for different values of $\lambda \in (0,1)$,
we  observe that the convergence of this algorithm depends, as expected,  on the value of $\lambda$.
When $\lambda \leq 0.5$, the iterate difference, \emph{$\text{monitor}^{(n)}_1$}, converges linearly, as expected.
However, significantly more iterations are required to observe this convergence:  on the order of $20000$ iterations compared to less than $500$ iterations for
all of the cyclic methods.

When $0.5<\lambda <0.8$, we observe that \emph{$\text{monitor}^{(n)}_1$} lies on spheres centered at $0$, with a constant radius $r \approx \|P_\SYM u^{(n)} - P_\SYM u^{(n-1)}\|$ as $n$ approaches infinity.
Despite that, we see the convergence of the difference gap, i.e., \emph{$\text{monitor}^{(n)}_2$} in both situations. 
To confirm these behaviors, we run the algorithm with two values, $\lambda = 0.5$ and $\lambda = 0.7$. Each $\lambda$ is tested with 50 trials, using the same starting points for both values.
Figure \ref{fig: comparison.CP.DRl} confirms the initial observations more definitively.
For $\lambda = 0.5$, the changes in all the trials converge linearly, whereas with $\lambda = 0.7$, only one of them converges, see Figures \ref{fig: comparison.CP.DRl}(a) and (c).
On the contrary, the gaps converge well, see \ref{fig: comparison.CP.DRl}(b) and (d).
Note that in these two experiments, we set a tolerance of $5 \times 10^{-18}$ and $n_{\max}=35000$ for $\lambda = 0.5$ and $1 \times 10^{-13}$ and $n_{\max}=10000$ for $\lambda = 0.7$, as this tolerance and maximum iteration are sufficient to observe linear convergence.

Figure \ref{fig:gap.error.CP.DRl} presents the results of the cyclic projection and relaxed Douglas-Rachford methods with the two different values of $\lambda$ for 50 trials. 
The (approximate) fixed points of CP and DR$\lambda$ on the product space with $\lambda = 0.5$ seem distributed across many clusters.
With this value of $\lambda$, the results of the two algorithms appear to be similar, but the latter algorithm still excels at finding smaller gaps. 
With $\lambda = 0.7$, however, DR$\lambda$ on the product space is much more robust, with most (approximate) fixed points moving to the lowest cluster,
where the gap and error are smaller.
In addition to the above experiments, we conducted 50 more trials for the relaxed Douglas-Rachford method, but now the starting points are the 50 (approximate) fixed points of the algorithm with $\lambda = 0.5$. 
Figure \ref{fig:gap.error.CP.DRl}(b) shows the results.
Despite different initial starting points, the algorithm tends to converge to good clusters, i.e., it returns good reconstructions.
This demonstrates the stability of the algorithm.
These findings conclude that DR$\lambda$ on the product space reliably avoids poor local minima when an appropriate value of the parameter $\lambda$ is chosen.
As characterized in Section \ref{sec:characterizations}, the set of fixed points of DR$\lambda$ on the product space
is monotone decreasing as $\lambda$ increases.   Though the characterization of the fixed points does not imply that the smaller
fixed point set {\em necessarily} corresponds to better local minima,  Figure \ref{fig:gap.error.CP.DRl} demonstrates this dramatically, showing that
the smaller fixed point set for $\lambda=0.7$ corresponds to points with smaller gaps, and errors.
\begin{figure}[hb]
\centering
\subfigure[]{
\includegraphics[scale=0.5]{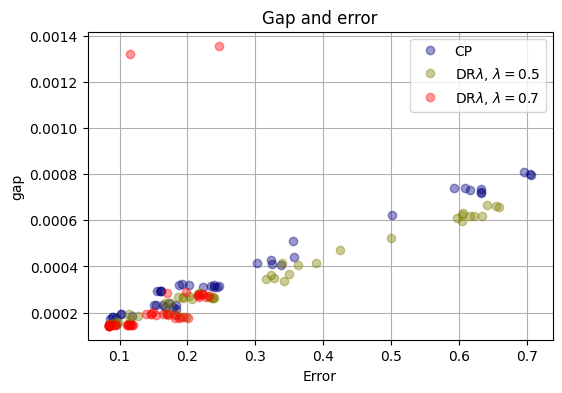}}
\subfigure[]{
\includegraphics[scale=0.5]{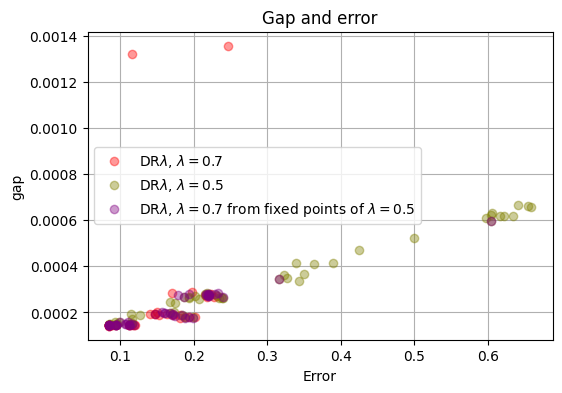}}
\caption{(a) Comparison of the efficiency between cyclic projection and relaxed Douglas-Rachford with $\lambda = 0.5$ and $\lambda = 0.7$. (b) Stabilization of approximate fixed points of relaxed Douglas-Rachford, with $\lambda = 0.7$, starting from random initialization, and the approximate fixed points of the relaxed Douglas-Rachford with $\lambda = 0.5$.
}
\label{fig:gap.error.CP.DRl}
\end{figure}

\subsection{Experimental Data}
We perform numerical comparisons on the experimental dataset presented in \cite{Bennecke25}. All sets and parameters from \cite{Bennecke25_data} are retained here.  Since the ground truth is unknown, we compare the gap sizes for fixed points of the cyclic projection algorithm with the gaps achieved by the  product space formulation of the relaxed Douglas-Rachford method, initialized from the fixed points of CP.
\begin{figure}[h!]
\centering
\subfigure[]{
\includegraphics[scale=0.53]{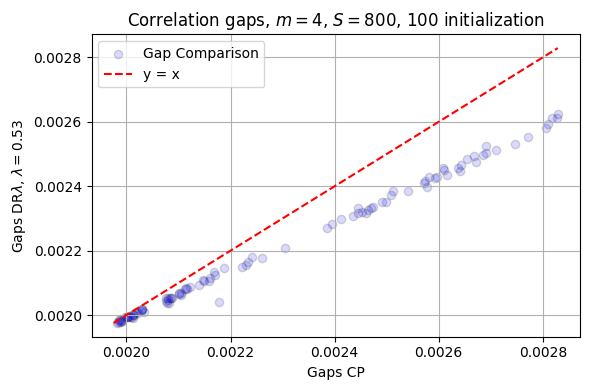}}
\subfigure[]{
\includegraphics[scale=0.53]{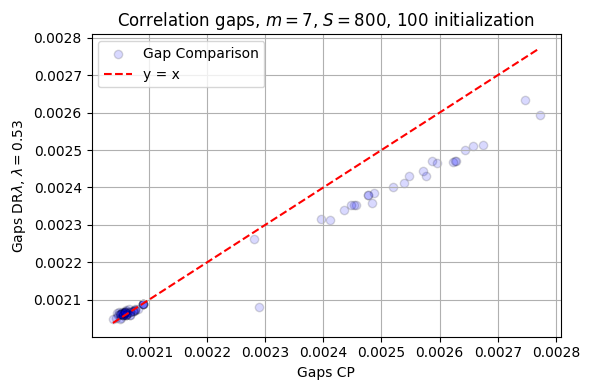}}
\subfigure[]{
\includegraphics[scale=0.53]{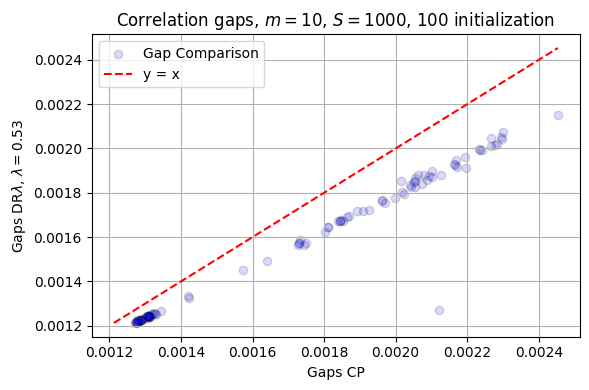}}
\caption{Comparison of the quality of fixed points between cyclic projections and product space formulation of the relaxed Douglas-Rachford algorithm with $\lambda = 0.53$.
The points show the gap sizes  achieved by DR$\lambda$ (y-axis) when initialized from the fixed points of CP (gap size shown on the x-axis).
}
\label{fig:correlation.gap.CP.DRl}
\end{figure}
\FloatBarrier
Figure \ref{fig:correlation.gap.CP.DRl} shows the correspondence of the gapsizes  for the two methods using 4, 7 and 10 data spheres, denoted as $m$.
We run the CP algorithm with 100 initializations and subsequently start the product space DR$\lambda$ algorithm from the corresponding
100 fixed points of the CP algorithm, with $\lambda=0.53$ in each case.  This parameter was found by trial and error and yielded the best results for  the method.  If a point lies on the red diagonal lines, it indicates that the DR$\lambda$ algorithm did not improve the gap over what was already achieved by the CP algorithm.  In all three plots, however, we observe that all 100 points lie below the diagonal, indicating that the relaxed Douglas-Rachford method always achieves a smaller gap than the cyclic projection. Furthermore, the DR$\lambda$ gap tends to decrease more sharply as the CP gap increases, suggesting that applying DR$\lambda$ after CP will have a more significant impact for problems with a relatively broad distribution of fixed points.  Interestingly, and quite unexpected, the experimental data appears to lead to a feasibility problem with a comparatively small range of fixed point gaps relative to the simulated data in the previous section.  In other words, the experimental data set is more regular than our simulated data set.  This apparent regularity of the experimental data can be understood as, perhaps, an averaging due to noise which eliminates regions with a large gap, understood as extreme local minima.  This makes intuitive sense in light of the characterization of the fixed points of the DR$\lambda$ algorithm given in \eqref{eq:Fix DRl product space}:  fixed points associated with large gaps should be less stable.  The difference between the structures reconstructed from the CP fixed points and those reconstructed from the subsequent DR$\lambda$ fixed points are most of the time not distinguishable physically  for this data (see Figure \ref{fig:recon comparison}(b)-(c)).  As shown in Figure \ref{fig:correlation.gap.CP.DRl}(b)-(c), however, in a small number of instances, the DR$\lambda$ algorithm can shift the CP fixed point to a significantly different structure.  This is demonstrated in \ref{fig:recon comparison}(d)-(e). 
\begin{figure}
\begin{tabular}{ccc}
\subfigure[]{
\includegraphics[scale=.9]{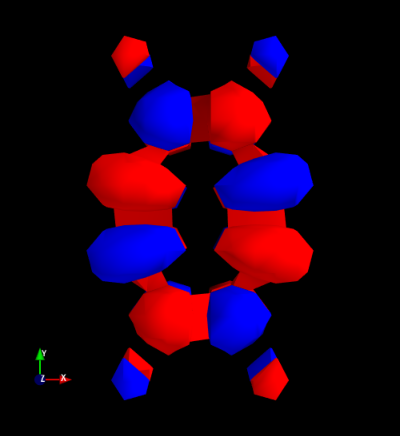}}
&
     \subfigure[]{
 \includegraphics[scale=.9]{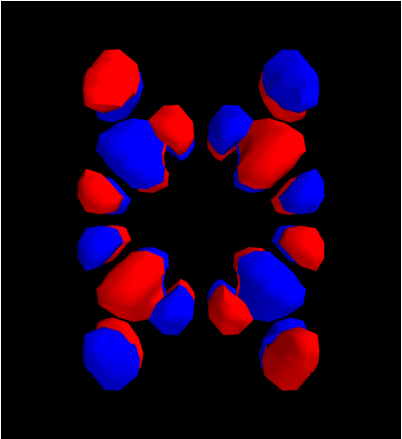}}
 &
\subfigure[]{
\includegraphics[scale=.9]{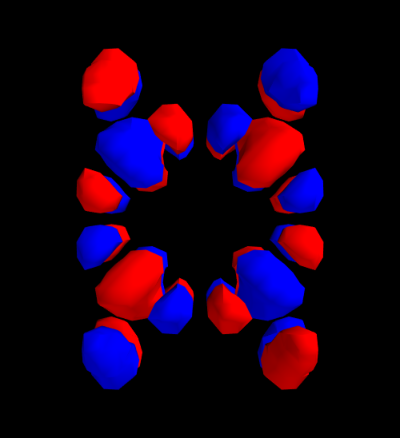}}\\
$~$
&\subfigure[]{
\includegraphics[scale=.9]{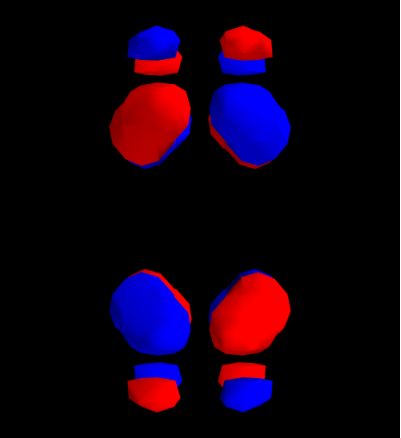}}
&
\subfigure[]{
\includegraphics[scale=.9]{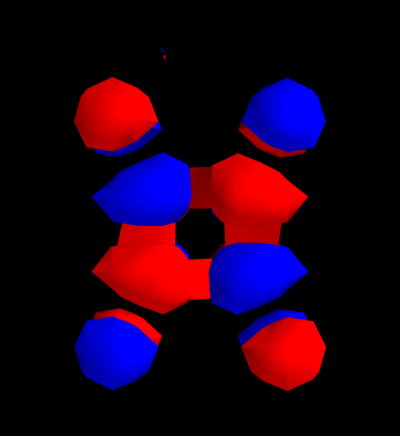}}
\end{tabular}
\caption{Examples of reconstructions using 10 data spheres. (a) A reconstruction with a small gap corresponding to a physically expected structure. (b) Reconstruction at a fixed point of CP with a large gap. (c) Reconstruction at the fixed point  of DR$\lambda$ ($\lambda=0.53$) initialized from the CP fixed point shown in (b).
(d) Reconstruction at a fixed point of CP with a large gap (gap=$0.00212$). (e) Reconstruction at the fixed point  of DR$\lambda$ (gap=$0.00127$) initialized from the CP fixed point shown in (d).
}
\label{fig:recon comparison}
\end{figure}

\section{Conclusion}
The cyclic relaxed Douglas-Rachford method was first proposed in \cite{luke2018relaxed} as an alternative to the classical cyclic projections and the more recent cyclic Douglas-Rachford algorithms.  The hope for this algorithm was that it would more reliably find ``higher quality'' fixed points than the other methods.  We characterized the fixed points in \cite{DinhJansenLuke2024CDRl}, but the potential of this algorithm for filtering out bad local minima was not tested until this numerical study.  The results do confirm the initial motivation for this algorithm, but not dramatically.  Quite surprisingly, however, we found that the relaxed Douglas-Rachford algorithm on the product space, while exhibiting quite poor convergence rates, does an excellent job of filtering out bad local minima from all cyclic algorithms.  Both the product space and cyclic implementations of the Douglas-Rachford algorithm require significantly more iterations to find fixed points than the cyclic projections algorithm.  This is to be expected, since the iterates of the relaxed Douglas-Rachford algorithm, for a large enough relaxation parameter, will be dispersed across a broader region of the domain than cyclic projections.     Our numerical experiments therefore lead to the following recommendation: run cyclic projections to find some fixed point, and from this fixed point run DR$\lambda$ on the  product space formulation of the problem with as large a value of $\lambda$ as is numerically stable, in order to move toward a fixed point with smaller gaps between the constraint sets.  This advice is counter to the current practice for phase retrieval, where the Douglas-Rachford algorithm (known in the physics community as the Fienup Hybrid-Input-Output algorithm) is run for several iterations, and then cyclic projections (known as Gerchberg-Saxton or error reduction methods) is used to ``clean up'' the images \cite{Luke17}.


\end{document}